\numberwithin{equation}{subsection}
\newtheorem{theorem}{Theorem}[section]
\newtheorem{lemma}[theorem]{Lemma}
\newtheorem{example}[theorem]{Example}
\newtheorem{proposition}[theorem]{Proposition}
\newtheorem{corollary}[theorem]{Corollary}
\newtheorem{remark}[theorem]{Remark}
\newtheorem{definition}[theorem]{Definition}
\begin{document}

\title{Morita invariance of equivariant and Hopf-cyclic cohomology of module algebras over Hopf algebroids}
\author{Mamta Balodi}

\affil{\small{Department of Mathematics,\\ Indian Institute of Science, Bengaluru 560012, INDIA\\  mamtabalodi@iisc.ac.in}}
\date{}
\maketitle

\begin{abstract}
We investigate the equivariant and Hopf-cyclic cohomology of module algebras over Hopf algebroids and derive their Morita invariance. For this, we use the tools developed by McCarthy for $k$-linear categories and subsequently by Kaygun and Khalkhali for a certain class of categories, called as $H$-categories, for a Hopf algebra $H$.
\end{abstract}

\section{Introduction}
Hopf algebroids are one of the various generalizations, see \cites{S2,xu,Lu,bs}, of Hopf algebras over non-commutative base algebras. The one in which we are interested is introduced by B\"ohm and Szlach\'anyi in \cite{bs}. A Hopf algebroid consists of two compatible bialgebroid structures, called left and right bialgebroids (see \cite{ks}), on a given algebra, which are related to each other with the antipode. Hopf algebras, weak Hopf algebras \cite{bns} and Connes-Moscovici algebras \cite{CM2} are some examples of Hopf algebroids. Since the study of Hopf algebroids is very recent, there are many results on Hopf algebras yet to be investigated for Hopf algebroids. This paper is an attempt towards understanding the Hopf-cyclic theory of Hopf algebroids.

The cyclic (co)homology of Hopf algebras was discovered by Connes and Moscovici \cite{CM1}. A general framework was provided in \cite{HKRS} for cyclic theories whose cyclicity is based on Hopf-algebraic structures. These are referred to as Hopf-cyclic theories. The Hopf-cyclic (co)homology provides a natural generalization of Lie algebra (co)homology to the general framework of Noncommutative Geometry. 

McCarthy in \cite{carthy} defined the Hochschild and cyclic homology groups for a $k$-linear exact category $\mathcal{C}$ using the additive cyclic nerve of $\mathcal{C}$. He also defined the notion of a special homotopy which is a sufficient condition for two maps to induce the same maps of Hochschild and cyclic homology groups. As an application, he improved the proof of Morita invariance of cyclic homology given by him in \cite{carthy2}. Subsequently Kaygun and Khalkhali defined the notion of an $H$-category in \cite{HKRS} for a Hopf algebra $H$ and developed a cyclic (co)homology theory for $H$-categories. It was shown there that the cyclic $H$-module associated with a module algebra can also be defined as the cyclic module of a certain $H$-category using the nerve. They also proved that equivalent $H$-categories have isomorphic cyclic (co)homologies which then provided the Morita invariance of Hopf-cyclic (co)homology theory. This paper arose from the desire to generalize these results in case of Hopf algebroids. 


The paper is organized as follows: In Section 2, we recall basic definitions and a brief summary of the properties of Hopf algebroids and their modules and comodules. In Section 3, we first define the equivariant and Hopf-cyclic cohomology for a module algebra $A$ over a Hopf algebroid $\mathcal{H}$, with coefficients in a right-right module/comodule $M$. Then we extend the notion of an $\mathcal{H}$-category for Hopf algebroids defined for Hopf algebras in \cite{kk}, and prove that the category of all finitely generated left $\mathcal{H}$-equivariant right $A$-modules with the morphisms of right $A$-modules, is an $\mathcal{H}$-category. Then, analogous to \cite{kk}, we define a cyclic right $\mathcal{H}$-module $\mathcal{Q}_\bullet(\mathcal{C},M)$ for an $\mathcal{H}$-category and stable module/comodule $M$. Then we recall the notion of special homotopy \cite{carthy}, and, by adopting the categorical approach developed in \cite{kk}, we prove that two $\mathcal{H}$-equivariant equivalent functors induce a special homotopy between the cyclic right $\mathcal{H}$-modules as defined above. As a corollary, we get that equivalent $\mathcal{H}$-categories have isomorphic cyclic cohomologies. With the help of these ingredients, we derive Morita invariance of Hopf-cyclic cohomology for module algebras over Hopf algebroids.

Throughout the paper, $k$ will denote a commutative ring.

\section{Preliminaries}
\subsection{Bialgebroids}
Bialgebroids are generalization of bialgebras over non-commutative base algebras \cites{ks,Lu,S}. One works with rings and corings over non-commutative base algebras, instead of algebras and coalgebras over commutative rings. Let us recall these notions briefly from \cite{b}. 

Let $L$ be a $k$-algebra. An $L$-ring is a triple $(A,\mu,\eta)$, where $A$ is an $L$-bimodule and $\mu:A \otimes_L A \longrightarrow A$ and $\eta:L \longrightarrow A$ are $L$-bimodule maps satisfying the associativity and unit conditions 
$$\mu(\mu \otimes_L \text{id})=\mu(\text{id} \otimes_L \mu), \quad \mu(\eta \otimes_L \text{id})=\text{id}=\mu(\text{id} \otimes_L \eta).$$
An $L$-coring is a triple $(C,\Delta,\varepsilon)$, where $C$ is an $L$-bimodule and $\Delta:C \longrightarrow C \otimes_L C$ and $\varepsilon:C \longrightarrow L$ are $L$-bimodule maps satisfying the coassociativity and counit conditions 
$$(\Delta \otimes_L \text{id})\Delta=(\text{id} \otimes_L \Delta)\Delta, \quad (\varepsilon \otimes_L \text{id})\Delta=\text{id}=(\text{id} \otimes_L \varepsilon)\Delta.$$
A left module over an $L$-ring $(A,\mu,\eta)$ is a pair $(M,\phi)$, where $M$ is a left $L$-module and $\phi:A \otimes_L M \longrightarrow M$ is an $L$-module morphism such that
$$\phi(\mu \otimes_L \text{id})=\phi(\text{id} \otimes_L \phi), \quad \phi(\eta \otimes_L \text{id})=\text{id}.$$
A left $A$-module morphism $f:M \longrightarrow N$ is a left $R$-module map satisfying $f\phi_M=\phi_N(\text{id} \otimes_R f).$ Symmetrically, one can define right $L$-modules.

\begin{definition}
A left $L$-bialgebroid consists of the data $\mathcal{H}_L=(H,L,s_L, t_L, \Delta_L,\varepsilon_L)$, where $H$ and $L$ are $k$-algebras, called the total and base algebras, respectively. $H$ is an $L \otimes_k L^{\text{op}}$-ring via the algebra homomorphisms $s_L:L \longrightarrow H $ and $t_L:L^{\text{op}} \longrightarrow H$ called as source and target maps, respectively. This means that their images are commuting subalgebras of $H$, and thus making $H$ an $L$-$L$ bimodule via
\begin{equation}\label{lbimod}
l \cdot h \cdot l'= s_L(l)t_L(l')h, \quad l,l' \in L, h \in H.
\end{equation} 
The triple $(H, \Delta_L, \varepsilon_L)$ is an $L$-coring such that
\begin{gather*}
h_{(1)}t_L(l) \otimes_L h_{(2)} =  h_{(1)} \otimes_L h_{(2)}s_L(l),\\
\Delta_L(1_H)=1_H \otimes_L 1_H,\\
\Delta_L(hg)=\Delta_L(h)\Delta_L(g),\\
\varepsilon_L(1_H)=1_L,\\
\varepsilon_L(hs_L \varepsilon_L(g))=\varepsilon_L(hg)=\varepsilon_L(ht_L \varepsilon_L(g)),
\end{gather*}
for any $l \in L$, $h,g \in H$. Here we use the Sweedler's notation $\Delta_L(h)=h_{(1)} \otimes_L h_{(2)}$, with summation symbol suppressed.

Similarly, a right $R$-bialgebroid consists of the data $\mathcal{H}_R=(H,R,s_R, t_R, \Delta_R,\varepsilon_R)$, where $H$ and $R$ are $k$-algebras, called the total and base algebras, respectively. $H$ is an $R \otimes_k R^{\text{op}}$-ring via the algebra homomorphisms $s_R:R \longrightarrow H $ (source) and $t_R:R^{\text{op}} \longrightarrow H$ (target), i.e. their images are commuting subalgebras of $H$, and thus making $H$ an $R$-$R$ bimodule via
\begin{equation}\label{rbimod}
r \cdot h \cdot r'= hs_R(r)t_R(r'), \quad r,r' \in R, h \in H.
\end{equation} 
The triple $(H, \Delta_R, \varepsilon_R)$ is an $R$-coring such that
\begin{gather*}
s_R(r)h^{(1)} \otimes_R h^{(2)} =  h^{(1)} \otimes_R t_R(r)h^{(2)},\\
\Delta_R(1_H)=1_H \otimes_R 1_H,\\
\Delta_R(hg)=\Delta_R(h)\Delta_R(g),\\
\varepsilon_R(1_H)=1_R,\\
\varepsilon_R(s_R \varepsilon_R(h)g)=\varepsilon_R(hg)=\varepsilon_R(t_R \varepsilon_R(h)g),
\end{gather*}
for any $r \in R$, $h,g \in H$. Here we use the Sweedler's notation $\Delta_R(h)=h^{(1)} \otimes_R h^{(2)}$, with summation symbol suppressed.
\end{definition}

\begin{example}\label{trivbia}
A bialgebra $B$ over a commutative ring $k$ determines a left (right) $k$-bialgebroid in which both the source and target maps are equal to the unit map $k \longrightarrow B$.
\end{example}

\subsection{Hopf algebroids}
\begin{definition}
A Hopf algebroid $\mathcal{H}=(\mathcal{H}_L,\mathcal{H}_R,S)$ over base algebras $L$ and $R$ consists of a left bialgebroid $\mathcal{H}_L=(H,L,s_L, t_L, \Delta_L,\varepsilon_L)$, a right bialgebroid $\mathcal{H}_R=(H,R,s_R, t_R, \Delta_R,\varepsilon_R)$ on the same total algebra $H$, and a $k$-linear map $S:H \longrightarrow H$, called the antipode, such that following axioms hold:
\begin{gather}
s_L \varepsilon_L t_R = t_R, \quad t_L \varepsilon_L s_R = s_R, \quad s_R \varepsilon_R t_L = t_L, \quad t_R \varepsilon_R s_L = s_L,\label{anti-iso}\\
(\Delta_L \otimes_R \text{id})\Delta_R= (\text{id} \otimes_L \Delta_R)\Delta_L, \quad (\Delta_R \otimes_L \text{id})\Delta_L= (\text{id} \otimes_R \Delta_L)\Delta_R,\\
S(t_L(l)ht_R(r)) = s_R(r)S(h)s_L(l),\\
S(h_{(1)})h_{(2)}=s_R\varepsilon_R(h), \quad h^{(1)} S(h^{(2)})=s_L\varepsilon_L(h),\label{antipode}
\end{gather}
for any $l \in L$, $r \in R$ and $h \in H$.
\end{definition}
By axiom (\ref{anti-iso}), it follows that the base algebras $L$ and $R$ are anti-isomorphic. In fact, we have inverse algebra isomorphisms (see \cite{b})
\begin{align}
\varepsilon_L\circ s_R:R^{\text{op}} \longrightarrow L \quad \text{and} \quad \varepsilon_R\circ t_L:L \longrightarrow R^{\text{op}},\label{invalgiso}\\
\varepsilon_R\circ s_L:L^{\text{op}} \longrightarrow R \quad \text{and} \quad \varepsilon_L\circ t_R:R \longrightarrow L^{\text{op}}.
\end{align}

Let us look at some examples of Hopf algebroids before having some more properties of the antipode.
\begin{example}
A Hopf algebra $(H,\mu,\eta,\Delta,\varepsilon,S)$ over a commutative ring $k$ is a Hopf algebroid with $L=R=k$, $s_L=t_L=s_R=t_R=\eta:k \longrightarrow H$, $\varepsilon_L=\varepsilon_R=\varepsilon:H \longrightarrow k$, and $\Delta_L=\Delta_R=\Delta:H \longrightarrow H \otimes_k H.$
\end{example}

\begin{example}
A Hopf algebra $(H,\mu,\eta,\Delta,\varepsilon,S)$ with a character $\delta:H \longrightarrow k$ and the twisted antipode, defined by Connes and Moscovici in \cite{CM2}, $\tilde{S}=\mu(\delta \otimes S)\Delta$ determines a Hopf algebroid over $k$ as follows: the underlying $k$-bialgebra $(H,\mu,\eta,\Delta,\varepsilon)$ is a left $k$-bialgebroid as in Example \ref{trivbia}. For the right $k$-bialgebroid structure, again the source and target maps are given by $\eta$ and the right coproduct $\Delta_r: H \longrightarrow H \otimes_k H$ and the right counit $\varepsilon_R:H \longrightarrow k$ are given as
$$\Delta_r(h)=h_{(1)} \otimes_k \eta(\delta S(h_{(2)}))h_{(3)}, \quad \varepsilon_R:=\delta:H \longrightarrow k.$$ 
\end{example}

Let us recall some properties of the antipode $S$ of a Hopf algebroid $\mathcal{H}=(\mathcal{H}_L,\mathcal{H}_R,S)$ from \cite{b2} which will be used later. The antipode $S$ is an anti-algebra map on the total algebra $H$ i.e. for any $h, g \in H$,
\begin{equation}\label{antialg}
S(1_H)=1_H, \quad S(gh)=S(h)S(g).
\end{equation}
The antipode $S$ is also a anti-coring map $\mathcal{H}_L \longrightarrow \mathcal{H}_R$ and $\mathcal{H}_R \longrightarrow \mathcal{H}_L$ which means that the following hold:
\begin{gather}
\varepsilon_R S=\varepsilon_Rs_L\varepsilon_L, \quad (S \otimes_R \text{id})\Delta_R=(S \otimes_R \text{id})(\tau_L \circ \Delta_L),\\
\varepsilon_L S=\varepsilon_Ls_R\varepsilon_R, \quad (S \otimes_L \text{id})\Delta_L=(S \otimes_L \text{id})(\tau_R \circ \Delta_R),
\end{gather}
where $\tau_I:\mathcal{H}_I \otimes_I \mathcal{H}_I \longrightarrow \mathcal{H}_I \otimes_I \mathcal{H}_I$ is the flip map ($h \otimes g \mapsto g \otimes h$) for $I \in \{L,R\}$.

Further, if the antipode $S$ is bijective, then $S^{-1}$ is also an anti-algebra map on the total algebra $H$, as well as an anti-coring map from $\mathcal{H}_L \longrightarrow \mathcal{H}_R$ and $\mathcal{H}_R \longrightarrow \mathcal{H}_L$. Moreover, for any $h \in H$, the following hold:
\begin{align}
S^{-1}(h_{(2)})h_{(1)}=t_R\varepsilon_R(h),& \quad h^{(2)} S^{-1}(h^{(1)})=t_L\varepsilon_L(h),\label{invantipode}\\
t_R=S^{-1}s_R, \quad t_L\varepsilon_Lt_R=S^{-1}t_R,& \quad  t_L=S^{-1}s_L, \quad t_R\varepsilon_Rt_L=S^{-1}t_L,\\
\varepsilon_Lt_R\varepsilon_R=\varepsilon_LS^{-1},& \quad \varepsilon_Rt_L\varepsilon_L=\varepsilon_RS^{-1}.
\end{align}
%
%

\subsection{Modules and comodules for Hopf algebroids}
Let $\mathcal{H}_L=(H,L,s_L,t_L,\Delta_L,\varepsilon_L)$ be a left bialgebroid. A left $\mathcal{H}_L$-module means a left module of the $L \otimes_k L^{\text{op}}$-ring $(H,\mu,\eta)$. A left $\mathcal{H}_L$-module $M$ carries an $L$-$L$ bimodule structure $$l \cdot m \cdot l'=s_L(l)t_L(l')m,$$
for $l,l' \in L$ and $m \in M$. For a Hopf algebroid $\mathcal{H}=(\mathcal{H}_L,\mathcal{H}_R,S)$, a left $\mathcal{H}$-module is just a left $\mathcal{H}_L$-module and a right $\mathcal{H}$-module is just a right 
$\mathcal{H}_R$-module. A left $\mathcal{H}_L$-module morphism is a morphism of left modules of the $L \otimes_k L^{\text{op}}$-ring $(H,\mu,\eta)$. A right $\mathcal{H}_R$-module morphism can be defined symmetrically. 

\begin{definition}
Let $\mathcal{H}_L=(H,L,s_L,t_L,\Delta_L,\varepsilon_L)$ be a left bialgebroid. A $k$-algebra $A$ is called a left $\mathcal{H}_L$-module algebra, if $A$ is a left $\mathcal{H}_L$-module inheriting an $L$-$L$ bimodule structure from the $\mathcal{H}_L$-action denoted by $\triangleright$: $$l \cdot a \cdot l'=(l \cdot 1_H \cdot l')a=s_L(l)t_L(l')\triangleright a;$$ such that
\begin{gather*}
h \triangleright (ab)=(h_{(1)} \triangleright a)(h_{(2)} \triangleright b),\\ h \triangleright 1_A=\varepsilon_L(h) \cdot 1_A = 1_A \cdot \varepsilon_L(h) \equiv s_L(\varepsilon_L(h))\triangleright 1_A \equiv t_L(\varepsilon_L(h)) \triangleright 1_A.
\end{gather*}
for $l,l' \in L$, $a,b \in A$ and $h \in H$. A left $\mathcal{H}$-module algebra over a Hopf algebroid $\mathcal{H}=(\mathcal{H}_L,\mathcal{H}_R,S)$ means a left $\mathcal{H}_L$-module algebra. Similarly one can define a right $\mathcal{H}$-module algebra. 
\end{definition}

\begin{definition}
Let $\mathcal{H}=(\mathcal{H}_L,\mathcal{H}_R,S)$ be a Hopf algebroid. If $M$ is a left $\mathcal{H}$-module as well as a right $A$-module  such that $$h(ma)=(h_{(1)} \triangleright m)(h_{(2)} a),$$
then $M$ is called a left-right $\mathcal{H}$-equivariant $A$-module. 
\end{definition}

\begin{example}
A left module algebra $A$ over a Hopf algebroid $\mathcal{H}$ is an left-right $\mathcal{H}$-equivariant $A$-module, where $A$ is a right module over itself via its multiplication.
\end{example}

\begin{definition}
A right comodule over a Hopf algebroid $\mathcal{H}=(\mathcal{H}_L,\mathcal{H}_R,S)$ is a triple $(M, \rho_L,\rho_R)$, where $(M, \rho_L)$ is a right comodule over the $L$-coring $(\mathcal{H}_L,\Delta_L, \varepsilon_L)$ and $(M, \rho_R)$ is a right comodule over the $R$-coring $(\mathcal{H}_R,\Delta_R, \varepsilon_R)$ such that the $L$-$L$ and $R$-$R$ bimodule structures of $M$ satisfy
$$l \cdot m \cdot l'=\varepsilon_R t_L(l') \cdot m \cdot \varepsilon_R t_L(l),$$
and the compatibility conditions hold true:
\begin{gather}
(\rho_R \otimes_L H)\rho_L=(M \otimes_R \Delta_L)\rho_R,\\
(\rho_L \otimes_R H)\rho_R=(M \otimes_L \Delta_R)\rho_L.\label{comodcomp}
\end{gather}
\end{definition}

We use the notation $\rho_R(m)=m^{[0]} \otimes_R m^{[1]}$ and $\rho_L(m)=m_{[0]} \otimes_L m_{[1]}$ with summation symbol omitted. Symmetrically, one can define a left comodule over a Hopf algebroid $\mathcal{H}=(\mathcal{H}_L,\mathcal{H}_R,S)$. A  right-right (left-left) $\mathcal{H}$-module/comodule $M$ means a right (left) $\mathcal{H}$-module which is also a right (left) $\mathcal{H}$-comodule with no assumption on compatibilty between action and coaction. Moreover, $M$ is said to be stable if $m^{[0]}m^{[1]}=m=m_{[0]}m_{[1]}$ for all $m \in M$.


\section{Morita invariance}
In this section, we define the $\mathcal{H}$-equivariant and Hopf-cyclic cohomology of $\mathcal{H}$-module algebras over a Hopf algebroid $\mathcal{H}$ with coefficients in a stable $\mathcal{H}$-module/comodule. We adopt the categorical approach, developed by Kaygun and Khalkhali in \cite{kk}, for certain class of categories called as $\mathcal{H}$-categories. Using this approach and the notion of special homotopy, defined by McCarthy in \cite{carthy}, for cyclic $\mathcal{H}$-modules, we derive the Morita invariance of Hopf-cyclic cohomology for Hopf algebroids.

\subsection{$\mathcal{H}$-equivariant and Hopf-cyclic cohomology of $\mathcal{H}$-module algebras}\label{EHC}
Let $\mathcal{H}=(\mathcal{H}_L,\mathcal{H}_R,S)$ be a Hopf algebroid. A cyclic $R$-$R$-bimodule $X_\bullet$ is called a cyclic right $\mathcal{H}$-module if for each $n \geq 0$, the module $X_n$ is a right $\mathcal{H}$-module and all the structure morphisms are $\mathcal{H}$-module morphisms. 
A para-cyclic $\mathcal{H}$-module is the same as a cyclic $\mathcal{H}$-module except that the cyclic operators $\tau_n$ on $X_n$ do not necessarily satisfy $\tau^{n+1}_n = \text{id}$ for $n \geq 0$, and a pseudo para-cyclic $\mathcal{H}$-module is the same as a para-cyclic $\mathcal{H}$-module except that the face map $d_n^{n-1}:X_n \longrightarrow X_{n-1}$ and $\tau_n$ need not be $\mathcal{H}$-module morphims.

Let $A$ be a right $\mathcal{H}$-module algebra and $M$ be a stable right-right $\mathcal{H}$-module/comodule. We define a simplicial $R$-module as well as a right $\mathcal{H}$-module $\mathcal{T}_\bullet(A,M)$ by setting $\mathcal{T}_n(A,M):=A^{\otimes n+1} \otimes_R M$. The structure morphisms are given by

$\begin{array}{lll}
\vspace{.2cm}
d_i^{n-1}(a_0 \otimes_R \cdots \otimes_R a_n \otimes_R m) &=& \begin{cases} a_0 \otimes_R \cdots \otimes_R a_i a_{i+1} \otimes_R \cdots \otimes_R a_n \otimes_R m \quad \hspace{.4cm} \text{if}~  0 \leq i \leq n-1\\
(a_nm^{[1]})a_0 \otimes_R a_1 \otimes_R \cdots \otimes_R a_{n-1} \otimes_L m^{[0]} \quad \text{if}~ i=n\end{cases}\\
\vspace{.2cm}
s_i^n(a_0 \otimes_R \cdots \otimes_R a_n \otimes_R m) &=& a_0 \otimes_R \cdots \otimes_R a_i \otimes_R 1_A \otimes_R \cdots \otimes_R a_n \otimes_R m \\
\vspace{.5cm}
\tau_n(a_0 \otimes_R \cdots \otimes_R a_n \otimes_R m) &=& a_n m^{[1]} \otimes_R a_0 \otimes_R \cdots \otimes_R a_{n-1} \otimes_R m^{[0]},
\end{array}$

\noindent and the right $\mathcal{H}$-action is given by
$$(a_0 \otimes_R \cdots \otimes_R a_n \otimes_R m)h=a_0h^{(2)} \otimes_R \cdots \otimes_R a_nh^{(n+2)} \otimes_R mh^{(1)},$$
for any $n \geq 0$, $0 \leq i \leq n$, $a_0 \otimes_R \cdots \otimes_R a_n \otimes_R m \in \mathcal{T}_n(A,M)$ and $h \in \mathcal{H}$.

\begin{proposition}
$\mathcal{T}_\bullet(A,M)$ is a para-cyclic $R$-$R$-bimodule.
\end{proposition}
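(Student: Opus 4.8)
The strategy is to verify directly that the stated face maps $d_i^{n-1}$, degeneracies $s_i^n$, and cyclic operators $\tau_n$ satisfy the para-cyclic identities: the simplicial relations among the $d_i$ and $s_j$, and the compatibility relations $d_i \tau_n = \tau_{n-1} d_{i-1}$ for $1 \le i \le n$, $d_0 \tau_n = d_n$, $s_i \tau_n = \tau_{n+1} s_{i-1}$ for $1 \le i \le n$, $s_0 \tau_n = \tau_{n+1}^2 s_n$, but \emph{without} requiring $\tau_n^{n+1} = \mathrm{id}$, which is exactly what ``para-cyclic'' drops. Since $\mathcal{T}_n(A,M) = A^{\otimes n+1} \otimes_R M$, everything is determined by its effect on an elementary tensor $a_0 \otimes_R \cdots \otimes_R a_n \otimes_R m$, so each identity reduces to a finite algebraic manipulation.

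First I would dispose of the purely simplicial relations among the $d_i$ for $0 \le i \le n-1$ and all the $s_j$: these are the standard ones coming from the associative multiplication of $A$ and the insertion of $1_A$, and they are formally identical to the cyclic bar complex of an algebra, so they need no new input beyond associativity and unitality of $A$. The substance is in the relations involving the ``wrap-around'' face $d_n^{n-1}$ and the cyclic operator $\tau_n$. Here one must track how the $R$-coaction $\rho_R(m) = m^{[0]} \otimes_R m^{[1]}$ interacts with the tensor factors: the key point is that $\tau_n$ moves $a_n m^{[1]}$ to the front and replaces $m$ by $m^{[0]}$, and $d_n^{n-1}$ multiplies $a_n m^{[1]}$ into $a_0$. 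Checking, e.g., $d_0 \tau_n = d_n$ is immediate from the definitions; checking $s_0 \tau_n = \tau_{n+1}^2 s_n$ and $d_i \tau_n = \tau_{n-1} d_{i-1}$ requires commuting a degeneracy or an interior multiplication past the coaction, which is fine because those operations touch tensor slots disjoint from the slot carrying $m$.

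The main obstacle, and the step I would treat most carefully, is well-definedness over the various tensor-product-over-$R$ (and the one over $L$ that appears in $d_n^{n-1}$ and in the target of $\rho_L$): one must check that each structure map respects the relations defining $A^{\otimes n+1} \otimes_R M$ (and lands in the right bimodule), using the coring axioms for $(\mathcal{H}_R, \Delta_R, \varepsilon_R)$ and $(\mathcal{H}_L, \Delta_L, \varepsilon_L)$, the compatibility relations $(\rho_R \otimes_L H)\rho_L = (M \otimes_R \Delta_L)\rho_R$ etc. between $\rho_L$ and $\rho_R$, and the bimodule identity $l \cdot m \cdot l' = \varepsilon_R t_L(l') \cdot m \cdot \varepsilon_R t_L(l)$ relating the $L$- and $R$-actions on $M$. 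In particular the face $d_n^{n-1}$ outputs a tensor ending in $\otimes_L m^{[0]}$, so one needs coassociativity/counitality of $\rho_R$ (and the $L$-$R$ compatibility of the coactions) to see that repeated application of $d_n^{n-1}$ and $\tau_n$ is consistent — this bookkeeping of which coaction ($\rho_L$ versus $\rho_R$) appears where is the delicate part.

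Finally, once the identities are verified on elementary tensors, I would note that the right $\mathcal{H}$-action stated is not used for this proposition (it enters only in the subsequent claim that $\mathcal{T}_\bullet(A,M)$ is a \emph{para-cyclic $\mathcal{H}$-module}), so the present statement asks only for the para-cyclic $R$-$R$-bimodule structure; it therefore suffices to confirm that each $d_i^{n-1}$, $s_i^n$, $\tau_n$ is $R$-$R$-bilinear for the bimodule structure $r \cdot (a_0 \otimes_R \cdots) \cdot r' = (s_R(r) \triangleleft a_0) \otimes_R \cdots$ on the first factor and the evident one on $M$, which again is routine from the $R$-coring axioms and the module-algebra condition on $A$.
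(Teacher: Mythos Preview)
Your plan is sound in principle: a direct verification of all the para-cyclic identities on elementary tensors would establish the result, and your concerns about well-definedness over the balanced tensor products are legitimate (and are in fact swept under the rug in the paper). However, the paper proceeds by a different and considerably more economical route.

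Rather than checking the full list of simplicial and cyclic-compatibility identities, the paper first proves that each $\tau_n$ is \emph{invertible}, by writing down an explicit candidate inverse
\[
\tau_n^{-1}(a_0 \otimes_R \cdots \otimes_R a_n \otimes_R m)=a_1 \otimes_R \cdots \otimes_R a_n \otimes_R a_0 S^{-1}(m_{[1]}) \otimes_R m_{[0]}
\]
and checking $\tau_n^{-1}\tau_n=\mathrm{id}$ via the comodule compatibility $(\rho_L \otimes_R H)\rho_R=(M \otimes_L \Delta_R)\rho_L$, the antipode identity $h^{(2)}S^{-1}(h^{(1)})=t_L\varepsilon_L(h)$, and the isomorphism $\varepsilon_R t_L \colon L \to R^{\mathrm{op}}$. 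Once $\tau_n$ is known to be invertible, the paper simply observes that the stated faces and degeneracies satisfy
\[
d_i=\tau_{n-1}^{\,i}\,d_0\,\tau_n^{-i}, \qquad s_i=\tau_{n+1}^{\,i}\,s_0\,\tau_n^{-i},
\]
and from these expressions all the para-cyclic relations follow formally from the (trivial) simplicial behaviour of $d_0$ and $s_0$. This reduces the entire verification to a single nontrivial computation (invertibility of $\tau_n$), whereas your approach distributes the same algebraic content across many separate identity checks. The paper's method also makes transparent \emph{where} the Hopf-algebroid structure (specifically $S^{-1}$ and the $L$/$R$ coaction compatibility) is actually needed; in your scheme that input would reappear, less visibly, each time a wrap-around face or an iterate of $\tau_n$ is involved.
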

\begin{proof}
We first show that $\tau_n$ is invertible with inverse given by
$$\tau_n^{-1}(a_0 \otimes_R \cdots \otimes_R a_n \otimes_R m)=a_1 \otimes_R \cdots \otimes_R a_n \otimes_R a_0S^{-1}(m_{[1]}) \otimes_R m_{[0]}.$$
\begin{align*}
\tau_n^{-1}\tau_n(a_0 \otimes_R \cdots \otimes_R a_n \otimes_R m)&=a_0 \otimes_R \cdots \otimes_R a_{n-1} \otimes_R a_nm^{[1]}S^{-1}(m_{[1]}^{[0]}) \otimes_R m_{[0]}^{[0]}\\
\overset{(\ref{comodcomp})}{=} &a_0 \otimes_R \cdots \otimes_R a_{n-1} \otimes_R a_nm^{(2)}_{[1]}S^{-1}(m_{[1]}^{(1)}) \otimes_R m_{[0]}\\
\overset{(\ref{invantipode})}{=}&a_0 \otimes_R \cdots \otimes_R a_{n-1} \otimes_R a_nt_L\varepsilon_L(m_{[1]})\otimes_R m_{[0]}\\
= \hspace{.35cm} &a_0 \otimes_R \cdots \otimes_R a_{n-1} \otimes_R a_n\cdot  \varepsilon_Rt_L\varepsilon_L(m_{[1]})\otimes_R m_{[0]}\\
=\hspace{.35cm} &a_0 \otimes_R \cdots \otimes_R a_{n-1} \otimes_R a_n\otimes_R m_{[0]}\cdot  \varepsilon_Rt_L\varepsilon_L(m_{[1]})\\
\overset{(\ref{invalgiso})}{=}&a_0 \otimes_R \cdots \otimes_R a_{n-1} \otimes_R a_n\otimes_R m_{[0]}\cdot \varepsilon_L(m_{[1]})\\
=\hspace{.35cm} &a_0 \otimes_R \cdots \otimes_R a_{n-1} \otimes_R a_n\otimes_R m.
\end{align*}
The fourth and the last equality use the fact, respectively, that $A$ is a right $\mathcal{H}$-module algebra and $M$ is a right comodule over the $L$-coring $\mathcal{H}$. Similarly one can prove that $\tau_n\tau_n^{-1}=\text{id}$. Then the result follows by observing that the face and degeneracy maps can be rewritten as
$$d_i=\tau_{n-1}^id_0\tau_n^{-i}, \quad s_i=\tau_{n+1}^is_0\tau_n^{-i}. \qedhere$$ 
\end{proof}

Now for any $j \in \mathbb{N}$, we define a morphism $[\tau_*^j, \rho_n]$ from $\mathcal{T}_n(A,M) \otimes_R \mathcal{H}$ to $\mathcal{T}_n(A,M)$, where $\rho_n$ is the $\mathcal{H}$-module structure morphism on $\mathcal{T}_n(A,M)$:
$$[\tau_*^j, \rho_n]=\tau_n^j\rho_n-\rho_n(\tau_n^j \otimes \text{id}).$$
Let $\mathcal{Q}^H_\bullet(A,M)$ be the para-cyclic $R$-module and graded $\mathcal{H}$-module defined by $$\mathcal{Q}^H_n(A,M):=\mathcal{T}_n(A,M)/\bigcap_{j \in \mathbb{Z}} \text{im}[\tau_*^j, \rho_n].$$ 
Next we define a graded $\mathcal{H}$-submodule $\mathcal{Q}_\bullet(A,M)$ of $\mathcal{Q}^H_\bullet(A,M)$ containing elements of the form $a_0 \otimes_R \cdots \otimes_R a_n \otimes_R m \in \mathcal{Q}^H_\bullet(A,M)$ such that
\begin{align*}
(a_0 \otimes_R \cdots \otimes_R a_n \otimes_R m)h &=(a_0 \otimes_R \cdots \otimes_R a_n \otimes_R m)\varepsilon_R(h).
\end{align*}

\begin{proposition}
$\mathcal{Q}_\bullet(A,M)$ is a cyclic right $\mathcal{H}$-module.
\end{proposition}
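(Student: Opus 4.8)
The plan is to verify the three defining features of a cyclic right $\mathcal{H}$-module for $\mathcal{Q}_\bullet(A,M)$: (i) each $\mathcal{Q}_n(A,M)$ is a right $\mathcal{H}$-module; (ii) all face, degeneracy and cyclic operators descend to $\mathcal{Q}_\bullet(A,M)$ and are $\mathcal{H}$-module morphisms; and (iii) the cyclic operator satisfies $\tau_n^{n+1}=\mathrm{id}$. The key mechanism throughout is the passage to the quotient $\mathcal{Q}^H_\bullet(A,M)=\mathcal{T}_\bullet(A,M)/\bigcap_j\,\mathrm{im}\,[\tau_*^j,\rho_n]$, whose whole point is to force the para-cyclic structure maps of $\mathcal{T}_\bullet(A,M)$ — which by the previous proposition form only a \emph{pseudo} para-cyclic $\mathcal{H}$-module (the map $d_n^{n-1}$ and $\tau_n$ are not $\mathcal{H}$-linear on the nose, because of the $m^{[0]}/m^{[1]}$ twist landing in an $\otimes_L$ slot) — to become $\mathcal{H}$-linear on the quotient. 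I would first recall or re-derive, following \cite{kk}, that $\mathcal{Q}^H_\bullet(A,M)$ is a para-cyclic $R$-module and a \emph{graded} $\mathcal{H}$-module on which \emph{all} structure maps are $\mathcal{H}$-linear: the relations $d_i=\tau_{n-1}^i d_0\tau_n^{-i}$ and $s_i=\tau_{n+1}^i s_0\tau_n^{-i}$ reduce everything to $d_0$, $s_0$ and $\tau_n^{\pm1}$, and modding out by $\bigcap_j\mathrm{im}\,[\tau_*^j,\rho_n]$ is exactly what makes $\tau_n^j\rho_n=\rho_n(\tau_n^j\otimes\mathrm{id})$ hold in $\mathcal{Q}^H_n(A,M)$; a short check then shows $d_0$ and $s_0$ are already $\mathcal{H}$-linear, so all $d_i,s_i$ are too.

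Granting that, the argument for $\mathcal{Q}_\bullet(A,M)$ is structural. For (i): $\mathcal{Q}_n(A,M)$ is defined as the $R$-submodule of $\mathcal{Q}^H_n(A,M)$ of elements $x$ with $xh=x\,\varepsilon_R(h)$ for all $h\in\mathcal{H}$; this is visibly closed under the $\mathcal{H}$-action, since if $xh'=x\varepsilon_R(h')$ for all $h'$ then $(xh)h'=x(hh')=x\,\varepsilon_R(hh')$ and one checks $\varepsilon_R(hh')=\varepsilon_R(s_R\varepsilon_R(h)h')$ using the right $R$-coring counit axioms, matching $(xh)\varepsilon_R(h')=x\varepsilon_R(h)\varepsilon_R(h')$ up to the identification of the $R$-action with multiplication by $s_R$ of the counit — so $\mathcal{Q}_n(A,M)$ is an $\mathcal{H}$-submodule (with the action factoring through $\varepsilon_R$). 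For (ii): I must check that each structure map of $\mathcal{Q}^H_\bullet$ carries $\mathcal{Q}_\bullet$ into $\mathcal{Q}_\bullet$. This is immediate precisely because those maps are $\mathcal{H}$-module morphisms on $\mathcal{Q}^H_\bullet$: if $f$ is $\mathcal{H}$-linear and $xh=x\varepsilon_R(h)$, then $f(x)h=f(xh)=f(x\varepsilon_R(h))=f(x)\varepsilon_R(h)$, so $f(x)\in\mathcal{Q}_\bullet$. Thus $d_i,s_i,\tau_n$ restrict to $\mathcal{Q}_\bullet(A,M)$, and their restrictions are trivially $\mathcal{H}$-linear since on $\mathcal{Q}_\bullet$ the $\mathcal{H}$-action is scalar via $\varepsilon_R$.

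The one genuinely substantive point — and what I expect to be the main obstacle — is (iii): showing $\tau_n^{n+1}=\mathrm{id}$ on $\mathcal{Q}_n(A,M)$, i.e. that the para-cyclic module becomes honestly cyclic after restricting to the $\varepsilon_R$-invariants. Iterating $\tau_n$ on a representative $a_0\otimes_R\cdots\otimes_R a_n\otimes_R m$ produces, after $n+1$ steps, the element $a_0\,m^{[1]}_{(1)}\otimes_R a_1\,m^{[1]}_{(2)}\otimes_R\cdots\otimes_R a_n\,m^{[1]}_{(n+1)}\otimes_R m^{[0]}$ where the coproduct of $m^{[1]}$ (iterated via $\Delta_R$ and the compatibility relations between $\rho_L,\rho_R$) has been distributed across the tensor factors — up to the manipulations already used in the previous proof involving \eqref{comodcomp}, \eqref{antipode}/\eqref{invantipode}, \eqref{invalgiso}, and the module-algebra and comodule counit axioms. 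On a general element of $\mathcal{T}_n$ or $\mathcal{Q}^H_n$ this does \emph{not} collapse to the identity; the point is that on the $\varepsilon_R$-invariant submodule the right $\mathcal{H}$-action $(a_0\otimes_R\cdots\otimes_R a_n\otimes_R m)h=a_0h^{(2)}\otimes_R\cdots\otimes_R a_nh^{(n+2)}\otimes_R mh^{(1)}$ equals multiplication by $\varepsilon_R(h)$, which is exactly the statement that the distributed coproduct factors $h^{(1)},\dots,h^{(n+2)}$ can be "reabsorbed" and replaced by $\varepsilon_R$. Applying this invariance with $h$ running through the relevant Sweedler components of $m^{[1]}$, together with stability of $M$ ($m^{[0]}m^{[1]}=m=m_{[0]}m_{[1]}$) to eliminate the surviving counit terms, should collapse $\tau_n^{n+1}$ to the identity. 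I would organize this as: first compute $\tau_n^{n+1}$ explicitly on a representative, then rewrite it as an application of the $\mathcal{H}$-action by an element built from $m^{[1]}$, then invoke $\varepsilon_R$-invariance and stability. The bookkeeping of which coproduct (left $\Delta_L$ versus right $\Delta_R$) and which base ($\otimes_L$ versus $\otimes_R$) appears at each slot — governed by the compatibility conditions \eqref{comodcomp} — is the delicate part, but it is a finite, mechanical verification modeled on the corresponding argument in \cite{kk} for Hopf algebras.
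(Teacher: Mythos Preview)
Your proposal is correct and follows essentially the same route as the paper. The paper's proof is much terser --- it verifies only $\tau_n^{n+1}=\mathrm{id}$ explicitly and dismisses your points (i) and (ii) with ``Others are similar to check'' --- so your careful treatment of those parts simply fills in what the paper leaves implicit. For (iii) your plan matches the paper's six-line computation exactly: compute $\tau_n^{n+1}$ on a representative, use stability of $M$ to rewrite the result as the right $\mathcal{H}$-action of $m^{[1]}$ on $a_0\otimes_R\cdots\otimes_R a_n\otimes_R m^{[0]}$, apply the $\varepsilon_R$-invariance defining $\mathcal{Q}_\bullet$, and finish with the comodule counit axiom $m^{[0]}\varepsilon_R(m^{[1]})=m$. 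One small correction: iterating $\tau_n$ produces $a_0 m^{[1]}\otimes_R\cdots\otimes_R a_n m^{[n+1]}\otimes_R m^{[0]}$ with iterated $\rho_R$-indices rather than your $m^{[1]}_{(i)}$, and it is stability applied to $m^{[0]}$ (not just coassociativity) that shifts these to $m^{[2]},\dots,m^{[n+2]}$ while producing the factor $m^{[0]}m^{[1]}$ needed to recognize the $\mathcal{H}$-action.
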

\begin{proof}
We check the identity $\tau_n^{n+1}=\text{id}$ on $\mathcal{Q}_\bullet(A,M)$. Others are similar to check.
\begin{align*}
\tau_n^{n+1}(a_0 \otimes_R \cdots \otimes_R a_n \otimes_R m)&=a_0 m^{[1]} \otimes_R a_1 \otimes_R \cdots \otimes_R a_nm^{[n+1]} \otimes_R m^{[0]},\\
&=a_0 m^{[2]} \otimes_R a_1 \otimes_R \cdots \otimes_R a_nm^{[n+2]} \otimes_R m^{(0)}m^{[1]},\\
&=(a_0 \otimes_R \cdots \otimes_R a_n \otimes_R m^{[0]})m^{[1]},\\
&=(a_0 \otimes_R \cdots \otimes_R a_n \otimes_R m^{[0]})\varepsilon_R(m^{[1]}),\\
&=(a_0 \otimes_R \cdots \otimes_R a_n \otimes_R m^{[0]}\varepsilon_R(m^{[1]})),\\
&=a_0 \otimes_R \cdots \otimes_R a_n \otimes_R m. \qedhere
\end{align*} 
\end{proof}

\begin{definition}
The cyclic $\mathcal{H}$-module $\mathcal{Q}_\bullet(A,M)$ is called the $\mathcal{H}$-equivariant cyclic-module of the pair $(A, M)$. The cyclic cohomology of $\mathcal{Q}_\bullet(A,M)$ will be denoted by $HC^*_\mathcal{H}(A,M)$. We define a cyclic $R$-module as
$$C_\bullet(A,M)=\mathcal{Q}_\bullet(A,M) \otimes_\mathcal{H} R,$$
where $R$ is considered as an $\mathcal{H}$-module via the map $s_R\varepsilon_R$. The cyclic module $C(A,M)$ is called the Hopf-cyclic module of the pair $(A, M)$. Its cyclic cohomology will be denoted by $HC^*_\text{Hopf}(A,M)$.
\end{definition}

\subsection{$\mathcal{H}$-categories}
\begin{definition}
A small $k$-linear category $\mathcal{C}$ is said to be an $\mathcal{H}$-category if
\begin{enumerate}
\item[(i)] $\text{Hom}_\mathcal{C}(X,Y)$ is a left $\mathcal{H}$-module for all $X,Y \in Ob(\mathcal{C}),$ inheriting an $L$-$L$ bimodule structure from the $\mathcal{H}$-action i.e. $$l\cdot f \cdot l'=(l \cdot 1_H \cdot l')f=s_L(l)t_L(l')f,$$ 
for $l,l' \in L$ and $f \in \text{Hom}_\mathcal{C}(X,Y)$, and $l \cdot \text{id}_X=\text{id}_X \cdot l$,
\item[(ii)] for any $X \in Ob(\mathcal{C})$ and $h \in \mathcal{H}$, $h(\text{id}_X)=s_L \varepsilon_L(h)\text{id}_X \equiv t_L \varepsilon_L(h)\text{id}_X$,
\item[(iii)] for $h \in \mathcal{H}$ and any pair of composable morphisms $g:X \longrightarrow Y$, $f: Y \longrightarrow Z$, 
$$h(fg)=(h_{(1)}f)(h_{(2)}g).$$
\end{enumerate}
\end{definition}

\begin{definition}
A morphism $f:X \longrightarrow Y$ in an $\mathcal{H}$-category is said to be $\mathcal{H}$-invariant if $h(f)=s_L\varepsilon_L(h)f=t_L\varepsilon_L(h)f$ for all $h \in \mathcal{H}$.
\end{definition}

\begin{lemma}
Let $\mathcal{H}=(\mathcal{H}_L,\mathcal{H}_R,S)$ be a Hopf algebroid, where $S$ is assumed to be bijective, and $A$ be a left $\mathcal{H}$-module algebra. Then ${_\mathcal{H}}{M}{^A}$, the category of all finitely generated left $\mathcal{H}$-equivariant right $A$-modules with the morphisms are that of right $A$-modules, is an $\mathcal{H}$-category. 
\end{lemma}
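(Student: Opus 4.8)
The plan is to put a left $\mathcal{H}$-module structure on each morphism space by a conjugation action built from the antipode, and then verify clauses (i)--(iii) of the definition of an $\mathcal{H}$-category in turn. Fix objects $X,Y$ of ${_\mathcal{H}}M^A$; a morphism $X\to Y$ is by definition just a right $A$-linear map, so the morphism space is $\mathrm{Hom}_A(X,Y)$. For $h\in H$ and $f\in\mathrm{Hom}_A(X,Y)$ I would set
\[
(h\cdot f)(x)=h_{(1)}\cdot f\bigl(S(h_{(2)})\cdot x\bigr),\qquad x\in X,
\]
where $\Delta_L(h)=h_{(1)}\otimes_L h_{(2)}$ and $\cdot$ denotes the given $\mathcal{H}$-actions on $X$ and $Y$; bijectivity of $S$ will be used both to see that this descends to $H\otimes_L H$ and later that it is an honest action, since the antipode does not preserve $\mathcal{H}_L$ but interchanges it with $\mathcal{H}_R$, so one repeatedly needs $S^{-1}$ to come back. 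A useful preliminary observation: because $A$ is a left $\mathcal{H}$-module algebra and $X$ is $\mathcal{H}$-equivariant, the right $L$-action on $X$ inherited from the $\mathcal{H}_L$-action equals $x\cdot l=x\cdot(s_L(l)\triangleright 1_A)$ and hence factors through the right $A$-action; consequently every right $A$-linear map between equivariant $A$-modules is automatically right $L$-linear, and this is what lets the displayed formula descend over $\otimes_L$.

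With the action in hand, the verification breaks into the following steps. (a) $h\cdot f$ is again right $A$-linear: expand $(h\cdot f)(xa)$, push $S(h_{(2)})$ through the product $xa$ using the equivariance of $X$ and the anti-(co)multiplicativity of $S$ (the ``anti-coring'' relations rewrite $\Delta_L\circ S$ in terms of $\Delta_R$), pull the scalar out with the right $A$-linearity of $f$, push the outer $h_{(1)}$ through with the equivariance of $Y$, and collapse the resulting $H$-factor acting on $a$ by the antipode axioms (\ref{antipode}); what is left is $((h\cdot f)(x))\cdot a$. (b) This is a genuine left module over the $L\otimes_k L^{\mathrm{op}}$-ring $H$: $(hg)\cdot f=h\cdot(g\cdot f)$ follows from $\Delta_L$ being multiplicative and $S$ an anti-algebra map (\ref{antialg}), and $1_H\cdot f=f$ from $\Delta_L(1_H)=1_H\otimes_L1_H$ and $S(1_H)=1_H$. (c) The induced $L$-$L$-bimodule structure is the asserted $l\cdot f\cdot l'=s_L(l)t_L(l')\cdot f$, immediate from $\Delta_L(s_L(l))=s_L(l)\otimes_L1_H$, $\Delta_L(t_L(l))=1_H\otimes_L t_L(l)$, $S(1_H)=1_H$ and $S\circ t_L=s_L$; these last two identities also give $s_L(l)\cdot\mathrm{id}_X=t_L(l)\cdot\mathrm{id}_X$ (both being ``left multiplication by $l$'' on $X$), which is clause (i). For clause (ii), $(h\cdot\mathrm{id}_X)(x)=h_{(1)}S(h_{(2)})\cdot x$, and this reduces by the antipode axioms to $s_L\varepsilon_L(h)\cdot x$, so $h\cdot\mathrm{id}_X=s_L\varepsilon_L(h)\,\mathrm{id}_X\equiv t_L\varepsilon_L(h)\,\mathrm{id}_X$. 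For clause (iii), given composable $g\colon X\to Y$, $f\colon Y\to Z$, one expands $\bigl((h_{(1)}f)(h_{(2)}g)\bigr)(x)$, uses coassociativity of $\Delta_L$ to recognise the inner factor $S((h_{(1)})_{(2)})(h_{(2)})_{(1)}$ as an expression of the form $S(k_{(1)})k_{(2)}$, collapses it via (\ref{antipode}), and recombines the remaining coproduct legs to reach $h_{(1)}\cdot f\bigl(g(S(h_{(2)})\cdot x)\bigr)=(h\cdot(fg))(x)$. Finally, ``finitely generated'' plays no role in (i)--(iii) (it matters only for the later Morita statements), and one passes to a small skeleton to meet the smallness requirement.

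The step I expect to be the main obstacle is (a), the right $A$-linearity of $h\cdot f$ (and, before it, the well-definedness over $\otimes_L$). Unlike the Hopf-algebra case treated in \cite{kk}, the antipode here interchanges the left and right bialgebroid structures, so every time $S(h_{(2)})$ is pushed through a product and then re-collapsed one is forced to convert $\Delta_L$-coproducts into $\Delta_R$-coproducts and back; this is exactly where the hypothesis that $S$ be bijective, together with the compatibility axioms between $\mathcal{H}_L$ and $\mathcal{H}_R$ and the anti-coring relations for $S$ and $S^{-1}$, is essential, and keeping track of which counit ($\varepsilon_L$ or $\varepsilon_R$) and which source/target map surfaces after each reduction is the delicate bookkeeping.
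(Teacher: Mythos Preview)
Your proposal is correct and follows essentially the same approach as the paper: the same conjugation action $h(f)(x)=h_{(1)}f(S(h_{(2)})x)$ on $\mathrm{Hom}_A(X,Y)$, with the crux being the verification that $h(f)$ is again right $A$-linear. The paper's proof in fact writes out only this step (your step (a)) and leaves the remaining verifications (well-definedness over $\otimes_L$, the module axioms, and clauses (i)--(iii)) implicit, so your outline is more complete; note also that the paper's computation of (a) does not visibly invoke $S^{-1}$, so the role you assign to bijectivity in that particular step may be lighter than you anticipate.
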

\begin{proof}
For $X,Y \in Ob(\mathcal{C})$, $\text{Hom}_A(X,Y)$ is a left $\mathcal{H}$-module via $$h(f)(x)=h_{(1)}f(S(h_{(2)})x),$$ for any $x \in X$ and $f \in \text{Hom}_A(X,Y)$. We claim that $h(f)$ is still a morphism of right $A$-modules because
\begin{align*}
h(f)(xa) &= h_{(1)}f(S(h_{(2)})xa)=h_{(1)}f(S(h_{(2)})x)S(h_{(3)})a &\\
&= h_{(1)}f(S(h_{(2)})x)S(h_{(3)}\varepsilon_L(h_{(4)}))a\\
&= h_{(1)}f(S(h_{(2)})x)S(h_{(3)})S(t_L\varepsilon_L(h_{(4)}))a\\
&= h_{(1)}f(S(h_{(2)})x)S(h_{(3)})s_L\varepsilon_L(h_{(4)})a\\
&= h_{(1)}f(S(h_{(2)})x)S(h_{(3)})h_{(4)}a\\
&= h_{(1)}f(S(h_{(2)})x)s_R\varepsilon_R(h_{(3)})a\\
&= h_{(1)}f(S(h_{(2)})x)s_L \varepsilon_L s_R\varepsilon_R(h_{(3)})a\\
&= h_{(1)}f(S(h_{(2)})x)\varepsilon_L (S(h_{(3)}))a\\
&= h_{(1)}f((S(h_{(2)})x) \varepsilon_L (S(h_{(3)})))a\\
&= h_{(1)}f((S(h_{(2)})x)t_L \varepsilon_L (S(h_{(3)})))a\\
&= h_{(1)}f(S(h_{(2)}x) (S(h_{(3)})1_A ))a\\
&= h_{(1)}f(S(h_{(2)})x)a.
\end{align*} 
The fourth equality follows from (\ref{lbimod}) and (\ref{antialg}). The fifth equality follows by the fact that $S$ is an anti-algebra map and $St_L=s_L$, and  sixth one by the property of the module algebra $A$. The seventh equality follows by (\ref{antipode}), and eleventh follows from the fact that $X$ is a right $L$-module via the map $t_L$.
 \end{proof}
 
Let $\mathcal{H}$ and $A$ be as in the previous lemma. Then we have another examples of $\mathcal{H}$-categories: the category ${_\mathcal{H}}{\mathcal{F}}{^A}$ consisting of finitely generated free right $A$-modules (which are left $\mathcal{H}$-equivariant right $A$-modules
automatically), and the category $\ast^A_{\mathcal{H}}$ consisting of only one object $A$ considered as a left $\mathcal{H}$-equivariant right $A$-module.

\begin{definition}
Let $A$ be a left $\mathcal{H}$-module algebra. A projective $A$-module $P$ is called $\mathcal{H}$-equivariantly projective if 
\begin{itemize}
\item[(i)] $P$ is an $\mathcal{H}$-equivariant $A$ module.
\item[(ii)] there exists another $\mathcal{H}$-equivariant $A$ module $Q$ such that $F:=P \oplus Q$ is a free $\mathcal{H}$-equivariant $A$ module.
\item[(iii)] the canonical epimorphism $F \longrightarrow P$ is $\mathcal{H}$-invariant.
\end{itemize}
\end{definition}

The subcategory of ${_\mathcal{H}}{M}{^A}$ which consists of finitely generated $\mathcal{H}$-equivariantly projective right $A$-modules will be denoted by ${_\mathcal{H}}{\mathcal{P}}{^A}$. Note that this category consists of direct summands (both as $A$- and $\mathcal{H}$-modules) of finitely generated free $A$-modules with
their canonical $\mathcal{H}$-module structure.

\begin{definition}[Compare \cite{kk}]
Let $\mathcal{C}$ be an $\mathcal{H}$-category, and let $\xi$ be a full $\mathcal{H}$-subcategory of $\mathcal{C}$. The category $\mathcal{C}$ is called separated over $\xi$ iff for each object $X \in \mathcal{C}$, there exist a finite set of objects $\{X_1, \ldots, X_n\}$ in $\xi$ such that there are $\mathcal{H}$-invariant morphisms $u_i: X \longrightarrow X_i$ and $v_j: X_j \longrightarrow X$ in $\mathcal{C}$ satisfying $\sum_{i=1}^n v_iu_i=id_X$.
\end{definition}

\begin{lemma}\label{sep}
${_\mathcal{H}}{\mathcal{P}}{^A}$ is separated over ${_\mathcal{H}}{\mathcal{F}}{^A}$ which in turn is separated over $\ast^A_{\mathcal{H}}$.
\end{lemma}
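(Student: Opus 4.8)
The plan is to unwind the definition of "separated over'' and produce, for each object of the source category, the required finite family of $\mathcal{H}$-invariant morphisms. There are two separation statements to prove, so I would treat them in order. First I would show that ${_\mathcal{H}}{\mathcal{F}}{^A}$ is separated over $\ast^A_{\mathcal{H}}$. Take a finitely generated free right $A$-module $F \cong A^n$; the obvious candidates are the $n$ coordinate inclusions $v_i : A \longrightarrow A^n$ and coordinate projections $u_i : A^n \longrightarrow A$, which are right $A$-module maps satisfying $\sum_{i=1}^n v_i u_i = \mathrm{id}_F$. The only thing to check is that, with respect to the canonical $\mathcal{H}$-module structure on $\mathrm{Hom}_A$ given in the previous lemma (namely $h(f)(x) = h_{(1)} f(S(h_{(2)})x)$), these coordinate maps are $\mathcal{H}$-invariant, i.e. $h(u_i) = s_L\varepsilon_L(h)u_i$ and similarly for $v_i$. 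Since the $\mathcal{H}$-equivariant structure on a free module $A^n$ is the diagonal one coming from $A$, and the coordinate maps intertwine these actions strictly, one computes $h(u_i)(x) = h_{(1)}u_i(S(h_{(2)})x) = h_{(1)}\bigl(S(h_{(2)})x_i\bigr)$; then using that $u_i$ commutes with the $\mathcal{H}$-action componentwise this collapses via the antipode axiom $h_{(1)}S(h_{(2)}) = s_R\varepsilon_R(h)$ together with the relations \eqref{anti-iso} between $\varepsilon_L, \varepsilon_R, s_L, s_R$ to the required scalar multiple. The same computation handles $v_i$.

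Next I would show that ${_\mathcal{H}}{\mathcal{P}}{^A}$ is separated over ${_\mathcal{H}}{\mathcal{F}}{^A}$. By definition of $\mathcal{H}$-equivariantly projective, any $P \in {_\mathcal{H}}{\mathcal{P}}{^A}$ fits into $F = P \oplus Q$ with $F$ a finitely generated free $\mathcal{H}$-equivariant $A$-module and the canonical projection $\pi : F \longrightarrow P$ is $\mathcal{H}$-invariant; let $\iota : P \longrightarrow F$ be the canonical inclusion. These are right $A$-module maps with $\pi\iota = \mathrm{id}_P$, so setting $n=1$, $X_1 = F$, $u_1 = \iota$, $v_1 = \pi$ we get $v_1 u_1 = \mathrm{id}_P$. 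It remains to check that both $\iota$ and $\pi$ are $\mathcal{H}$-invariant. For $\pi$ this is part of the hypothesis (iii) in the definition of $\mathcal{H}$-equivariantly projective. For $\iota$ I would argue as follows: the $\mathcal{H}$-action on $P$ is the restriction of that on $F$, so $\iota$ strictly intertwines the $\mathcal{H}$-actions, and then the same antipode computation as above shows that a strictly $\mathcal{H}$-equivariant morphism is automatically $\mathcal{H}$-invariant for the $\mathrm{Hom}$-action $h(f)(x) = h_{(1)}f(S(h_{(2)})x)$: indeed if $\iota(hx) = h\iota(x)$ then $h(\iota)(x) = h_{(1)}\iota(S(h_{(2)})x) = h_{(1)}S(h_{(2)})\iota(x) = s_R\varepsilon_R(h)\iota(x)$, and $s_R\varepsilon_R(h)$ acting on $F$ equals $s_L\varepsilon_L(h)$ acting on $F$ by \eqref{anti-iso}, $s_L\varepsilon_L s_R\varepsilon_R = \varepsilon_L S$-type identities already used in the lemma above. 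Alternatively, since $P$ is a direct summand of $F$ as an $\mathcal{H}$-module, invariance of $\iota$ also follows formally from invariance of $\pi$ together with $\pi\iota = \mathrm{id}_P$ and the fact (to be isolated as a small observation) that $\mathcal{H}$-invariant morphisms are closed under composition with $\mathcal{H}$-invariant morphisms.

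I expect the main obstacle to be purely bookkeeping: verifying carefully that a morphism which \emph{strictly} commutes with the $\mathcal{H}$-actions on source and target is $\mathcal{H}$-invariant in the sense of the definition, i.e. that the conjugation action $h(f)(x) = h_{(1)}f(S(h_{(2)})x)$ sends such an $f$ to $s_L\varepsilon_L(h)f$. This is exactly a miniature version of the computation carried out in the proof of the preceding lemma, and uses the antipode axiom \eqref{antipode}, the anti-algebra property \eqref{antialg}, and the compatibility relations \eqref{anti-iso}, \eqref{invalgiso} among the source, target and counit maps. Once this observation is in place, both separation claims reduce to writing down the evident coordinate maps (for free modules) and the evident summand inclusion/projection (for equivariantly projective modules), and checking the trivial identity $\sum v_i u_i = \mathrm{id}$. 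I would state the strict-equivariance-implies-invariance fact explicitly as a preliminary remark so that it can be reused, then dispatch the two halves of the lemma in a couple of lines each.
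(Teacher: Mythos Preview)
Your approach is exactly the paper's: for a free module use the coordinate inclusions and projections, for an equivariantly projective module use the summand inclusion and projection coming from the definition, and observe that $\sum v_iu_i=\mathrm{id}$ in each case. The paper simply asserts the $\mathcal{H}$-invariance of these maps without the verification you sketch; if you do carry out that verification, be aware that in a Hopf algebroid the identity producing $s_L\varepsilon_L(h)$ is $h^{(1)}S(h^{(2)})=s_L\varepsilon_L(h)$ with the \emph{right} coproduct $\Delta_R$, whereas the Hom-action is written with $\Delta_L$, so your ``strictly $\mathcal{H}$-equivariant implies $\mathcal{H}$-invariant'' step requires a little more care with the two coproducts than your sketch indicates.
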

\begin{proof}
Let $X$ be a finitely generated $\mathcal{H}$-equivariantly projective module. Therefore, by definition, there exists a finitely generated $\mathcal{H}$-equivariantly free module $A^{\oplus n}$ and $\mathcal{H}$-invariant morphisms  $\pi_X:A^{\oplus n} \longrightarrow X$ and $i_X:X \longrightarrow A^{\oplus n}$ of $A$-modules such that $\pi_X \circ i_X=\text{id}_X$. This proves the first assertion. Next for any finitely generated $\mathcal{H}$-equivariantly free $A$-module $A^{\oplus n}$, there are canonical $\mathcal{H}$-invariant morphisms  $\pi_j:A^{\oplus n} \longrightarrow A$ and $i_j:A \longrightarrow A^{\oplus n}$ satisfying $\sum_j i_j \circ \pi_j=\text{id}_{A^{\oplus n}}$.
\end{proof}

\subsection{Cyclic module for an $\mathcal{H}$-category}

\begin{definition}
Let $\mathcal{H}$ be a Hopf algebroid with a bijective antipode $S$. Let $\mathcal{C}$ be an $\mathcal{H}$-category and $M$ be a stable left-left $\mathcal{H}$-module/comodule. Keeping in mind the fact that $L$ and $R$ are anti-isomorphic $k$-algebras, we define a pseudo para-cyclic right $\mathcal{H}$-module $\mathcal{T_\bullet}(\mathcal{C},M)$ by setting
$$\mathcal{T}_n(\mathcal{C},M)=\bigoplus \text{Hom}_{\mathcal{C}}(X_1,X_0) \otimes_L \cdots \otimes_L \text{Hom}_{\mathcal{C}}(X_n,X_{n-1}) \otimes_L \text{Hom}_{\mathcal{C}}(X_0,X_n) \otimes_L M,$$
where the direct sum runs over all $(X_0,X_1,\ldots,X_n) \in \text{Ob}(\mathcal{C})^{n+1}$. The faces, degeneracies and the cyclic operator, respectively, are defined as

$\begin{array}{lll}
d_i^{n-1}(f_0 \otimes_L \cdots \otimes_L f_n \otimes_L m) &=& \begin{cases} f_0 \otimes_L \cdots \otimes_L f_i f_{i+1} \otimes_L \cdots \otimes_L f_n \otimes_L m \hspace{1.1cm} \text{if}~  0 \leq i \leq n-1\\
(m_{[-1]}f_n)f_0 \otimes_L f_1 \otimes_L \cdots \otimes_L f_{n-1} \otimes_L m_{[0]} \quad \text{if}~ i=n\end{cases}\\\\

s_i^n(f_0 \otimes_L \cdots \otimes_L f_n \otimes_L m) &=& \begin{cases} f_0 \otimes_L \cdots \otimes_L f_i \otimes_L \text{id}_{X_{i+1}} \otimes_L \cdots \otimes_L f_n \otimes_L m \quad \text{if}~  0 \leq i \leq n-1\\
f_0 \otimes_L \cdots \otimes_L f_n \otimes_L \text{id}_{X_0} \otimes_L m \hspace{2.8cm} \text{if}~ i=n\end{cases}\\\\
\tau_n(f_0 \otimes_L \cdots \otimes_L f_n \otimes_L m)&=& m_{[-1]}f_n \otimes_L f_0 \otimes_L \cdots \otimes_L f_{n-1} \otimes_L m_{[0]},
\end{array}$

\vspace{.2cm}
\noindent where $m \mapsto m_{[-1]} \otimes_L m_{[0]}$ is the left $\mathcal{H}_L$-comodule structure of $M$. The right $\mathcal{H}$-action on $\mathcal{T_\bullet}(\mathcal{C},M)$ is given by
$$(f_0 \otimes_L \cdots \otimes_L f_n \otimes_L m)h=S^{-1}(h_{(1)})(f_0) \otimes_L \cdots \otimes_L S^{-1}(h_{(n+1)})(f_n) \otimes_L  S^{-1}(h_{(n+2)})(m),$$
for any $n \geq 0$, $0 \leq i \leq n$, $f_0 \otimes_L \cdots \otimes_L f_n \otimes_L m \in \mathcal{T}_n(\mathcal{C},M)$ and $h \in \mathcal{H}$.

For every $n \geq 0$, we define $q_{n,m}:\mathcal{T}_n(\mathcal{C},M) \longrightarrow  \mathcal{T}_n^m(\mathcal{C},M)$ to be the coequalizer of the morphisms $\rho_n(\tau_n^m \otimes \text{id})$ and $\tau_n^m\rho_n$ for all $m \in \mathbb{N}$, in the category $_R\mathcal{M}_R$ of $R$-$R$-bimodules,  where $\rho_n$ is the right $\mathcal{H}$-module (i.e. right $\mathcal{H}_R$-module) structure morphism on $\mathcal{T}_n$. Then, we define $$\mathcal{Q}^\mathcal{H}_n(\mathcal{C},M):={\text{colim}}_{m \in \mathbb{N}}~ \mathcal{T}_n(\mathcal{C},M) \overset{q_{n,m}}{\longrightarrow}  \mathcal{T}_n^m(\mathcal{C},M),$$
where $q_n:\mathcal{T}_n(\mathcal{C},M) \longrightarrow  \mathcal{Q}^\mathcal{H}_n(\mathcal{C},M) $ is the canonical morphism for any $n \geq 0.$ Applying the construction in \cite[Theorem 4.7]{k} to the category $_R\mathcal{M}_R$, we get the following result. For the sake of completeness, we also provide the proof which is essentially analogous to the proof of Theorem 4.7 in \cite{k}:
\end{definition}

\begin{proposition}
$\mathcal{Q}^\mathcal{H}_\bullet(\mathcal{C},M)$ is a para-cyclic right $\mathcal{H}$-module.
\end{proposition}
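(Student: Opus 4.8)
The plan is to realise $\mathcal{Q}^{\mathcal{H}}_\bullet(\mathcal{C},M)$ as an explicit quotient of $\mathcal{T}_\bullet(\mathcal{C},M)$ and to transport all the structure, running the argument of \cite[Theorem 4.7]{k} inside $_R\mathcal{M}_R$ in place of the category of $k$-modules. Unwinding the colimit of coequalizers, one identifies $\mathcal{Q}^{\mathcal{H}}_n(\mathcal{C},M)=\mathcal{T}_n(\mathcal{C},M)/N_n$, where $N_n$ is the $R$-$R$-sub-bimodule of $\mathcal{T}_n(\mathcal{C},M)$ generated by the images of the maps $[\tau_*^m,\rho_n]:=\tau_n^m\rho_n-\rho_n(\tau_n^m \otimes \text{id})$, $m\in\mathbb{Z}$, and $q_n$ is the canonical projection. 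Since $\mathcal{T}_\bullet(\mathcal{C},M)$ is pseudo para-cyclic, all the para-cyclic identities among the $d_i,s_i,\tau_n$ already hold as maps of $R$-$R$-bimodules, $\tau_n$ is invertible, and every structure map other than $d_n$ and $\tau_n$ is a right $\mathcal{H}$-module morphism; what passing to the quotient buys is that on $\mathcal{Q}^{\mathcal{H}}_n$ every power $\tau_n^m$ becomes compatible with the right $\mathcal{H}$-action.

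I would first record that $N_\bullet$ is $\mathcal{H}$-stable, so that each $\mathcal{Q}^{\mathcal{H}}_n$ is again a right $\mathcal{H}_R$-module and $\rho_n$ descends; this is formal, since $\left([\tau_*^m,\rho_n](x\otimes h)\right)g=[\tau_*^m,\rho_n](x\otimes hg)-[\tau_*^m,\rho_n]((xh)\otimes g)$. Next I would show $N_\bullet$ is preserved by all the structure maps of $\mathcal{T}_\bullet(\mathcal{C},M)$. Stability under $\tau_n^{\pm1}$ is again formal, from the identities $\tau_n\circ[\tau_*^m,\rho_n]=[\tau_*^{m+1},\rho_n]-[\tau_*^1,\rho_n]\circ(\tau_n^m\otimes\text{id})$ and $\tau_n^{-1}\circ[\tau_*^m,\rho_n]=[\tau_*^{m-1},\rho_n]-[\tau_*^{-1},\rho_n]\circ(\tau_n^m\otimes\text{id})$. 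For the $\mathcal{H}$-linear faces $d_0,\dots,d_{n-1}$ and the degeneracies $s_i$, one uses the para-cyclic identities to rewrite, say, $d_0\tau_n^m$ as $\tau_{n-1}^{a}d_*$ with $d_*$ a face; whenever $d_*$ lies among $d_0,\dots,d_{n-1}$ the $\mathcal{H}$-linearity of $d_*$ turns a commutator in degree $n$ into one in degree $n-1$, and the general $d_i$ and the $s_i$ are handled identically. The only case in which $d_*$ is the exceptional face $d_n=d_0\circ\tau_n$ — namely $m\equiv1\pmod{n+1}$ — reduces, after the same manipulation, to the single assertion that the $\mathcal{H}$-defect $d_n(xh)-(d_nx)h$ lies in $N_{n-1}$; granting this, the last face needs nothing further, as $d_n(N_n)\subseteq d_0(\tau_n(N_n))\subseteq d_0(N_n)\subseteq N_{n-1}$.

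With these verifications, $\mathcal{Q}^{\mathcal{H}}_\bullet(\mathcal{C},M)=\mathcal{T}_\bullet(\mathcal{C},M)/N_\bullet$ inherits faces $\bar d_i$, degeneracies $\bar s_i$ and a cyclic operator $\bar\tau_n$ obeying all the para-cyclic identities (being induced from maps that obey them), and, $N_\bullet$ being $\mathcal{H}$-stable, its terms are right $\mathcal{H}$-modules; it remains to see that the structure maps are $\mathcal{H}$-module morphisms. For $\bar d_0,\dots,\bar d_{n-1}$ and all $\bar s_i$ this is inherited from $\mathcal{T}_\bullet$; for $\bar\tau_n$ it is precisely the $m=1$ coequalizer relation built into $\mathcal{Q}^{\mathcal{H}}_\bullet$; and $\bar d_n=\bar d_0\circ\bar\tau_n$ is then a composite of $\mathcal{H}$-module morphisms. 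This yields that $\mathcal{Q}^{\mathcal{H}}_\bullet(\mathcal{C},M)$ is a para-cyclic right $\mathcal{H}$-module.

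The main obstacle I anticipate is the one genuinely computational point above: showing that the defect $d_n(xh)-(d_nx)h$, measuring the failure of $d_n$ to be $\mathcal{H}$-linear, already lies in $N_{n-1}$. Here one must expand the explicit formulas for $d_n$, $\tau_n$, the $\mathcal{H}_L$-coaction $m\mapsto m_{[-1]}\otimes_L m_{[0]}$ and the $S^{-1}$-twisted $\mathcal{H}$-action, and recognise the outcome as a sum of commutators $[\tau_*^j,\rho_{n-1}]$ by means of the Hopf algebroid identities (\ref{anti-iso})--(\ref{invantipode}). The extra bookkeeping relative to \cite{k} is that two anti-isomorphic base algebras $L$ and $R$ enter at once — the tensor factors of $\mathcal{T}_n(\mathcal{C},M)$ are over $L$ while its bimodule and coequalizer structures are over $R$ — so one must keep the $\otimes_L$ and $\otimes_R$ placements consistent throughout; the remainder is the formal quotient argument of \cite[Theorem 4.7]{k}.
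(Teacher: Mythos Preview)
Your approach is essentially the paper's own: both run the coequalizer/colimit argument of \cite[Theorem 4.7]{k} inside $_R\mathcal{M}_R$, first descending the powers of $\tau_n$ and the $\mathcal{H}$-action to the quotient, then the $\mathcal{H}$-linear faces $d_0,\dots,d_{n-1}$ and the degeneracies, recovering $d_n=d_0\tau_n$ by composition, and finally reading off $\mathcal{H}$-linearity of all structure maps on $\mathcal{Q}^{\mathcal{H}}_\bullet$; your explicit-quotient formulation $\mathcal{T}_n/N_n$ and the paper's diagram chase are two presentations of the same mechanism. Where you are more careful is in isolating the $\mathcal{H}$-defect $d_n(xh)-(d_nx)h\in N_{n-1}$ as the one residual point and proposing an explicit Hopf-algebroid computation for it; the paper does not single this out, writing only that the needed identity ``can be proved'' from pseudo-para-cyclicity and deferring to \cite{k}.
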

\begin{proof} In what follows, we denote $\mathcal{T}_\bullet(\mathcal{C},M)$ simply by $\mathcal{T}_\bullet$  and $\mathcal{Q}^\mathcal{H}_\bullet(\mathcal{C},M)$ by $\mathcal{Q}_\bullet$. Consider the following non-commutative diagram in $_R\mathcal{M}_R$:
\[
\xymatrix{
\mathcal{T}_n \otimes \mathcal{H} \ar[r]^-{\rho_n}  & \mathcal{T}_n \ar[r]^-{q_n} & \mathcal{Q}^\mathcal{H}_n\\
\mathcal{T}_n \otimes \mathcal{H} \ar[r]^-{\rho_n} \ar[u]_{\tau_n^j \otimes \text{id}} & \mathcal{T}_n \ar[r]^-{q_n} \ar[u]_{\tau_n^j} & \mathcal{Q}^\mathcal{H}_n\\
\mathcal{T}_n \otimes \mathcal{H} \ar[r]^-{\rho_n} \ar[u]_{\tau_n^i \otimes \text{id}} & \mathcal{T}_n \ar[u]_{\tau_n^i}\\
}
\]
Since $q_n$ is the coequalizer of the morphisms $\rho_n(\tau_n^i \otimes \text{id})$ and $\tau_n^i\rho_n$ for all $i \in \mathbb{N}$, and 
$$q_n \tau_n^j \tau_n^i \rho_n=q_n\tau_n^j \rho_n (\tau_n^i \otimes \text{id}),$$
we have a restriction of $\tau_n^j$ to $\mathcal{Q}^\mathcal{H}_n$, which will be denoted by $(\tau_n^j)_{\mathcal{H}}$ for any $j \in \mathbb{N}$.

Now for $0 \leq j \leq n$, consider the following non-commutative diagram in $_R\mathcal{M}_R$:
\[
\xymatrix{
\mathcal{T}_{n-1} \otimes \mathcal{H} \ar[r]^-{\rho_{n-1}}  & \mathcal{T}_{n-1} \ar[r]^-{q_{n-1}} & \mathcal{Q}^\mathcal{H}_{n-1}\\
\mathcal{T}_{n} \otimes \mathcal{H} \ar[r]^-{\rho_{n}} \ar[u]_{d_j \otimes \text{id}} & \mathcal{T}_{n} \ar[r]^-{q_{n}} \ar[u]_{d_j} & \mathcal{Q}^\mathcal{H}_{n}\\
\mathcal{T}_{n} \otimes \mathcal{H} \ar[r]^-{\rho_{n}} \ar[u]_{\tau_{n}^i \otimes \text{id}} & \mathcal{T}_{n} \ar[u]_{\tau_{n}^i}\\
}
\]
The upper square commutes whenever $0 \leq j \leq n-1$. Since $d_n=d_0\tau_n$ and $\tau_n$ has a restriction to $\mathcal{Q}^\mathcal{H}_n$, without loss of generality, we can assume that $0 \leq j \leq n-1$. Therefore, using that fact that $\mathcal{T}_\bullet$ is pseudo-para cyclic, it can be proved that
\begin{equation*}
q_{n-1}d_j\tau_n^i \rho_n=q_{n-1}d_j\rho_n(\tau_n^i \otimes \text{id}),
\end{equation*}
for any $i \in \mathbb{N}$. Thus we obtain a unique morphism from $\mathcal{Q}^\mathcal{H}_n \longrightarrow \mathcal{Q}^\mathcal{H}_{n-1}$, which will be denoted by $(d_j)_\mathcal{H}$. Similiarly, One can show the commutativity of relevant diagrams for the degeneracies. This shows that $\mathcal{Q}^\mathcal{H}_\bullet$ is a para-cyclic module in $_R\mathcal{M}_R$.
Now for any $j \in \mathbb{N}$, consider the following non-commutative diagram:
\[
\xymatrix{
\mathcal{T}_n \ar[r]^-{\tau_n^j}  & \mathcal{T}_n \ar[r]^-{q_n} & \mathcal{Q}^\mathcal{H}_n\\
\mathcal{T}_n \otimes \mathcal{H} \ar[r]^-{\tau_n^j \otimes \text{id}} \ar[u]_{\rho_n} & \mathcal{T}_n \otimes \mathcal{H} \ar[r]^-{q_n \otimes \text{id}} \ar[u]_{\rho_n} & \mathcal{Q}^\mathcal{H}_n \otimes \mathcal{H}\\
\mathcal{T}_n \otimes \mathcal{H} \otimes \mathcal{H} \ar[r]^--{\tau_n^j \otimes \text{id} \otimes \text{id}} \ar[u]_{\rho_n \otimes \text{id}} & \mathcal{T}_n \otimes \mathcal{H} \otimes \mathcal{H} \ar[u]_{\rho_n \otimes \text{id}}\\
}
\]
But we have,
\begin{align*}
q_n\rho_n(\rho_n \otimes \text{id})(\tau_n^j \otimes \text{id} \otimes \text{id}) &=q_n\rho_n(\text{id} \otimes \mu_R)(\tau_n^j \otimes \text{id} \otimes \text{id}),\\
&= q_n\rho_n(\tau_n^j \otimes \text{id})(\text{id} \otimes \mu_R),\\
&= q_n \tau_n^j \rho_n (\text{id} \otimes \mu_R),\\
&= q_n\rho_n(\tau_n^j \otimes \text{id})(\rho_n \otimes \text{id}).
\end{align*}
This implies that $q_n\rho_n$ factors through the colimit of the coequalizers of the pairs $(\rho_n \otimes \text{id})(\tau_n^j \otimes \text{id} \otimes\text{id})$ and $(\tau_n^j \otimes \text{id})(\rho_n \otimes \text{id})$ as $j$ runs through $\mathbb{N}$. But this colimit is $\mathcal{Q}^\mathcal{H}_n \otimes \mathcal{H}.$ Thus we get a right $\mathcal{H}$-module structure on $\mathcal{Q}^\mathcal{H}_n$, and hence $\mathcal{Q}^\mathcal{H}_\bullet$ is a para-cyclic module in the category of right $\mathcal{H}$-modules.

Now we need to show that the structure morphisms of the para-cyclic module $\mathcal{Q}^\mathcal{H}_n$ commutes with the action of $\mathcal{H}$. For that we need to check the commutativity of the larger squares in the following diagrams, for any $i \geq 0$ and $0 \leq j \leq n$:
\[
\xymatrix{
\mathcal{T}_{n} \otimes \mathcal{H} \ar[r]^-{d_j \otimes \text{id}}  \ar[d]^-{\rho_n} & \mathcal{T}_{n-1} \otimes \mathcal{H} \ar[d]^-{\rho_{n-1}}\\
\mathcal{T}_{n} \ar[r]^-{d_j} \ar[d]_{q_n} & \mathcal{T}_{n-1} \ar[d]^-{q_{n-1}}\\
\mathcal{Q}^\mathcal{H}_n \ar[r]^-{{(d_j)}_\mathcal{H}} & \mathcal{Q}^\mathcal{H}_{n-1}\\
} \hspace{2cm}
\xymatrix{
\mathcal{T}_{n} \otimes \mathcal{H}  \ar[d]^-{\rho_n} & \mathcal{T}_{n-1} \otimes \mathcal{H} \ar[l]_-{s_j \otimes \text{id}} \ar[d]^-{\rho_{n-1}}\\
\mathcal{T}_{n} \ar[d]_{q_n} & \mathcal{T}_{n-1} \ar[l]_-{s_j}  \ar[d]^-{q_{n-1}}\\
\mathcal{Q}^\mathcal{H}_n  & \mathcal{Q}^\mathcal{H}_{n-1} \ar[l]_-{{(s_j)}_\mathcal{H}}\\
}
\]
\[
\xymatrix{
\mathcal{T}_{n} \otimes \mathcal{H} \ar[r]^-{\tau_n^i \otimes \text{id}}  \ar[d]^-{\rho_n} & \mathcal{T}_{n} \otimes \mathcal{H} \ar[d]^-{\rho_{n}}\\
\mathcal{T}_{n} \ar[r]^-{\tau_n^i} \ar[d]_{q_n} & \mathcal{T}_{n} \ar[d]^-{q_{n}}\\
\mathcal{Q}^\mathcal{H}_n \ar[r]^-{{(\tau_n^i)}_\mathcal{H}} & \mathcal{Q}^\mathcal{H}_n\\
} 
\]
For the first two diagrams, the commutativity of the upper squares follow from the fact that $\mathcal{T}_\bullet$ is pseudo-para cyclic. The bottom squares are already shown to be commutative. Therefore both the diagrams are commutative. In the last diagram, the bottom square commutes, and, since $q_n$ coequalizes $\rho_n(\tau_n^i \otimes \text{id})$ and $\tau_n^i\rho_n$, the larger square also commutes. Thus $\mathcal{Q}^\mathcal{H}_\bullet$ is a para-cyclic right $\mathcal{H}$-module.
\end{proof}

\begin{definition}
For every $n \geq 0$, let $\mathcal{Q}_n(\mathcal{C},M)$ be the equalizer of the pair $(\tau_n^{n+1},\text{id})$ of para-cyclic right $\mathcal{H}$-module morphisms, which implies that $\mathcal{Q}_n(\mathcal{C},M)$ is also a para-cyclic right $\mathcal{H}$-module. Moreover, $\tau_n^{n+1}=\text{id}$ on $\mathcal{Q}_n(\mathcal{C},M)$. Thus $\mathcal{Q}_\bullet(\mathcal{C},M)$ is a cyclic right $\mathcal{H}$-module.
\end{definition}

\begin{remark}\label{oneobjiso}
$\mathcal{Q}(\ast^A_{\mathcal{H}},M)$ is isomorphic to $\mathcal{Q}(A,M)$ (defined in Subsection \ref{EHC}) as cyclic $\mathcal{H}$-modules, where $\ast^A_{\mathcal{H}}$ is the $\mathcal{H}$-category consisting of only one object $A$ considered as a left $\mathcal{H}$-equivariant right $A$-module.
\end{remark}

\begin{example}
Let $\mathcal{H}=(\mathcal{H}_L,\mathcal{H}_R,S)$ be a Hopf algebroid, $\mathcal{S}$ be a category with two objects $0$ and $1$, and two non-identity morphisms $0 \longrightarrow 1$ and $1 \longrightarrow 0$. Let $L\mathcal{S}$ be the $L$-linear category with $\text{Ob}(L\mathcal{S})=\text{Ob}(\mathcal{S})$ and $\text{Hom}_{L\mathcal{S}}(i,j)$ the free $L$-module on the set $\text{Hom}_{\mathcal{S}}(i,j)$ for $i,j \in \{0,1\}$. Then $L\mathcal{S}$ is an $\mathcal{H}$-category via the map $s_L\varepsilon_L$. Thus one can consider the cyclic $\mathcal{H}$-module $CC_\bullet(L\mathcal{S}):=\mathcal{T}_\bullet(L\mathcal{S},L)=\mathcal{Q}_\bullet(L\mathcal{S},L).$
\end{example}

\subsection{Special homotopy and Morita invariance}
Recall from \cite{carthy}, a map $\lambda$ between two cyclic $\mathcal{H}$-modules $X_\bullet$ and $Y_\bullet$ is called semi-cyclic 
if it is a map of graded $\mathcal{H}$-modules that commutes with the face maps and the cyclic operators but not necessarily with the degeneracy operators. Two semi-cyclic module maps $f,g: X_\bullet \longrightarrow Y_\bullet$ of cyclic $\mathcal{H}$-modules $X_\bullet$ and $Y_\bullet$, are called special homotopic if there exists a semi-cyclic $\mathcal{H}$-module 
map $\hbar_\bullet$ from $X_\bullet \otimes CC_\bullet(L\mathcal{S})$ to $Y_\bullet$ such that the following diagram commutes:
\[
\xymatrix{
X_\bullet \ar[r]^-{\varepsilon_0} \ar[dr]_{f} & X_\bullet \otimes CC_\bullet(L\mathcal{S}) \ar[d]^{\hbar_\bullet} & X_\bullet \ar[l]_-{\varepsilon_1} \ar[dl]^{g}\\
 & Y_\bullet
}
\]

\noindent where $\varepsilon_i(\Phi)=\Phi \otimes_L (\text{id}_i, \ldots, \text{id}_i)$ for $i=0,1$.

\begin{remark}\label{homotopy}
We would like to recall from \cite{carthy} the following fact: a special homotopy implies that the two maps are homotopic.
\end{remark}

The following is a generalization of Proposition 2.4.1 from \cite{carthy} for cyclic $\mathcal{H}$-modules.

\begin{proposition}\label{homoeq}
Let $\mathcal{F}, \mathcal{G}:\mathcal{C} \longrightarrow \mathcal{C}'$ be $\mathcal{H}$-equivariant equivalent functors of $\mathcal{H}$-categories. Then $\mathcal{Q}_\bullet(\mathcal{F},M)$ and $\mathcal{Q}_\bullet(\mathcal{G},M)$ are special homotopic for a fixed stable $\mathcal{H}$-module/comodule $M$. 
\end{proposition}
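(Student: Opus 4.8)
The plan is to construct an explicit semi-cyclic $\mathcal{H}$-module map $\hbar_\bullet \colon \mathcal{Q}_\bullet(\mathcal{F},M) \otimes CC_\bullet(L\mathcal{S}) \longrightarrow \mathcal{Q}_\bullet(\mathcal{G},M)$ out of a natural $\mathcal{H}$-equivariant equivalence $\eta \colon \mathcal{F} \Rightarrow \mathcal{G}$, mimicking McCarthy's construction of a special homotopy from a natural transformation (Proposition 2.4.1 of \cite{carthy}) but carrying the left $\mathcal{H}$-action through the bar-type formulas. First I would unwind what the data give: since $\mathcal{F}$ and $\mathcal{G}$ are $\mathcal{H}$-equivariant functors, for each object $X$ of $\mathcal{C}$ we have an isomorphism $\eta_X \colon \mathcal{F}(X) \to \mathcal{G}(X)$ in $\mathcal{C}'$, natural in $X$, and $\mathcal{H}$-equivariance forces the components $\eta_X$ to be $\mathcal{H}$-invariant morphisms in $\mathcal{C}'$ (this is the analogue of McCarthy's requirement, and it is exactly the hypothesis that makes the action survive the homotopy formula). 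The object set of $\mathcal{S}$ being $\{0,1\}$, a simplex of $\mathcal{Q}_n(\mathcal{F},M)\otimes CC_n(L\mathcal{S})$ is (a sum of) tensors $(f_0\otimes_L\cdots\otimes_L f_n\otimes_L m)\otimes_L (\sigma_0,\ldots,\sigma_n)$ with each $\sigma_j$ a morphism of $L\mathcal{S}$, i.e.\ an $L$-linear combination of the four morphisms of $\mathcal{S}$; the string $(\sigma_0,\ldots,\sigma_n)$ records a pattern of ``$0$'s and $1$'s'' telling us at which spots to insert $\mathcal{F}$ versus $\mathcal{G}$ and where to splice in a copy of $\eta$ or $\eta^{-1}$.

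The core step is then to write down $\hbar_n$ on such a tensor by the standard prism/shuffle formula: apply $\mathcal{F}$ to the $f_i$'s to the left of the ``jump'' in the $\mathcal{S}$-string, apply $\mathcal{G}$ to the right, and insert $\eta$ (or $\eta^{-1}$) at the jump, landing in $\mathcal{Q}_n(\mathcal{G},M)$; on the coefficients one uses $L$-linearity and the rule $\eta^{-1}\circ\eta=\mathrm{id}$, $\eta\circ\eta^{-1}=\mathrm{id}$ to collapse degenerate patterns. I would then verify in order: (i) $\hbar_\bullet$ is well defined on $\mathcal{T}_\bullet$, then descends to the coequalizer-colimit $\mathcal{Q}^\mathcal{H}_\bullet$ (because $\eta_X$ is $\mathcal{H}$-invariant, the $[\tau_*^j,\rho_n]$-relations are respected — this is where invariance of $\eta$ is used a second time), and restricts to the equalizer $\mathcal{Q}_\bullet$; (ii) $\hbar_\bullet$ commutes with all face maps $d_i$ — the ``inner'' faces $0\le i\le n-1$ by functoriality of $\mathcal{F}$, $\mathcal{G}$ and naturality of $\eta$, the last face $i=n$ by the compatibility of $\eta$ with the comodule structure of $M$ (the $m_{[-1]}$, $m_{[0]}$ bookkeeping is formal once $\eta$ is $\mathcal{H}$-invariant, since $\mathcal{H}$-invariant morphisms commute past the coaction); (iii) $\hbar_\bullet$ commutes with the cyclic operator $\tau_n$, again by naturality of $\eta$; (iv) $\hbar_\bullet$ is a map of right $\mathcal{H}$-modules, which follows from $\mathcal{H}$-equivariance of $\mathcal{F}$ and $\mathcal{G}$ together with invariance of $\eta$ (so $S^{-1}(h_{(j)})$ acting on $\eta$ gives back $\varepsilon$-scalars), using the axioms $(S\otimes_R\mathrm{id})\Delta_R = (S\otimes_R\mathrm{id})(\tau_L\circ\Delta_L)$ and the anti-coring identities recalled in Section~2. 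Finally, evaluating $\hbar_\bullet$ on $\varepsilon_0(\Phi)=\Phi\otimes_L(\mathrm{id}_0,\ldots,\mathrm{id}_0)$ collapses to applying $\mathcal{F}$ everywhere (so $\hbar\circ\varepsilon_0=\mathcal{Q}_\bullet(\mathcal{F},M)$) and on $\varepsilon_1(\Phi)$ to applying $\mathcal{G}$ everywhere (so $\hbar\circ\varepsilon_1=\mathcal{Q}_\bullet(\mathcal{G},M)$), which is exactly the commuting triangle in the definition of special homotopy.

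I expect the main obstacle to be step (i)–(iv) done \emph{simultaneously and $\mathcal{H}$-equivariantly}: on $\mathcal{T}_\bullet(\mathcal{C},M)$ the homotopy formula is classical, but one must check it is compatible with passing to $\mathcal{Q}^\mathcal{H}_\bullet$ and $\mathcal{Q}_\bullet$, i.e.\ that $\hbar_\bullet$ sends $\bigcap_j \operatorname{im}[\tau_*^j,\rho_n]$ into the corresponding subobject downstairs and preserves the $\tau_n^{n+1}=\mathrm{id}$ equalizer. The clean way to organize this is to prove $\hbar_\bullet$ is a morphism of \emph{pseudo para-cyclic right $\mathcal{H}$-modules} $\mathcal{T}_\bullet(\mathcal{C},M)\otimes CC_\bullet(L\mathcal{S})\to \mathcal{T}_\bullet(\mathcal{C}',M)$ first, then invoke the universal properties used in the construction of $\mathcal{Q}^\mathcal{H}_\bullet$ and $\mathcal{Q}_\bullet$ to get the induced map for free. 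The bookkeeping with $S^{-1}$ on tensor factors of high order (the $h_{(1)},\ldots,h_{(n+2)}$ expansion) is the place where one must be careful, but all the needed identities — $S^{-1}$ is an anti-algebra and anti-coring map, together with \eqref{invantipode} and the relations in \eqref{anti-iso} — are exactly those collected in Section~2, so no new structural input is required.
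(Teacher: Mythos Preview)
Your proposal is correct and follows essentially the same approach as the paper: build the homotopy factor-wise at the level of $\mathcal{T}_\bullet$ from the $\mathcal{H}$-invariant natural isomorphism $\eta$ (the paper writes the formula as $\mu(f,i)=\mathcal{F}(f)$, $\mathcal{F}(f)\eta_X^{-1}$, $\eta_Y\mathcal{F}(f)$, or $\eta_Y\mathcal{F}(f)\eta_X^{-1}=\mathcal{G}(f)$ according to the four morphisms of $\mathcal{S}$, which unwinds to your prism description), check it is a morphism of pseudo para-cyclic right $\mathcal{H}$-modules, and then descend to $\mathcal{Q}_\bullet$ via the universal properties. One notational slip to fix: the domain and codomain of $\hbar_\bullet$ should be $\mathcal{Q}_\bullet(\mathcal{C},M)\otimes CC_\bullet(L\mathcal{S})$ and $\mathcal{Q}_\bullet(\mathcal{C}',M)$, not $\mathcal{Q}_\bullet(\mathcal{F},M)$ and $\mathcal{Q}_\bullet(\mathcal{G},M)$, which denote the induced maps rather than objects.
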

\begin{proof}
Let $h$ be a natural isomorphism from $\mathcal{F}$ to $\mathcal{G}$ of $\mathcal{H}$-equivariant functors i.e. $h$ is $\mathcal{H}$-invariant. We define a morphism $h_\bullet$ from $\mathcal{T}_\bullet(\mathcal{C},M) \otimes CC_\bullet(L\mathcal{S})$ to $\mathcal{T}_\bullet(\mathcal{C'},M)$ of pseudo para-cyclic $\mathcal{H}$-modules
$$h((f_0 \otimes_L \cdots \otimes_L f_n \otimes_L m) \otimes (i_0 \otimes_L \cdots \otimes_L i_n \otimes_L i_0))=\mu(f_0,i_0)\otimes_L \cdots \otimes_L \mu(f_n,i_n) \otimes_L m,$$
where for any $f \in \text{Hom}_\mathcal{C}(X,Y)$ we let

$$ \mu(f,i)=
\begin{cases} \begin{array}{lcc} \mathcal{F}(f)  & \text{if}& i = \text{id}_0,\\
\mathcal{F}(f)h_X^{-1}  & \text{if}& \hspace{.7cm} i= 1 \longrightarrow 0,\\
h_Y\mathcal{F}(f)  & \text{if}& \hspace{.7cm} i= 0 \longrightarrow 1,\\
h_Y\mathcal{F}(f)h_X^{-1}  & \text{if}& i = \text{id}_1.
\end{array}
\end{cases}$$

This induces a semi-cyclic $\mathcal{H}$-module map $\hbar_\bullet$ from $\mathcal{Q}_\bullet(\mathcal{C},M) \otimes CC_\bullet(L\mathcal{S})$ to $\mathcal{Q}_\bullet(\mathcal{C'},M)$, which is also a special homotopy as $\hbar_\bullet \varepsilon_0=\mathcal{Q}_\bullet(\mathcal{F},M)$ and $\hbar_\bullet \varepsilon_1=\mathcal{Q}_\bullet(\mathcal{G},M)$.
\end{proof}

\begin{corollary}\label{homoequi}
If two $\mathcal{H}$-categories $\mathcal{C}$ and $\mathcal{C'}$ are $\mathcal{H}$-equivariantly equivalent, then $\mathcal{Q}_\bullet(\mathcal{C},M)$ and $\mathcal{Q}_\bullet(\mathcal{C'},M)$ are $\mathcal{H}$-equivariantly homotopy equivalent, for a fixed stable $\mathcal{H}$-module/comodule $M$. 
\end{corollary}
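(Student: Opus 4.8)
The plan is to deduce this from Proposition \ref{homoeq} together with the functoriality of the assignment $\mathcal{C} \mapsto \mathcal{Q}_\bullet(\mathcal{C},M)$. First I would record that an $\mathcal{H}$-equivariant functor $\mathcal{F}\colon\mathcal{C}\to\mathcal{C}'$ induces a morphism of cyclic right $\mathcal{H}$-modules
$$\mathcal{Q}_\bullet(\mathcal{F},M)\colon \mathcal{Q}_\bullet(\mathcal{C},M)\longrightarrow \mathcal{Q}_\bullet(\mathcal{C}',M),$$
coming from the map $f_0 \otimes_L \cdots \otimes_L f_n \otimes_L m \mapsto \mathcal{F}(f_0) \otimes_L \cdots \otimes_L \mathcal{F}(f_n) \otimes_L m$ on $\mathcal{T}_\bullet$; this commutes with the faces, degeneracies, the cyclic operator and the $\mathcal{H}$-action because $\mathcal{F}$ preserves composition, identities and the $\mathcal{H}$-module structure on Hom-spaces, and hence it descends through the coequalizers $q_{n,m}$ defining $\mathcal{Q}^\mathcal{H}_\bullet$ and restricts to the equalizer subobjects defining $\mathcal{Q}_\bullet$. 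The assignment is plainly functorial: a composition of $\mathcal{H}$-equivariant functors is $\mathcal{H}$-equivariant, $\mathcal{Q}_\bullet(\mathcal{G}\mathcal{F},M) = \mathcal{Q}_\bullet(\mathcal{G},M)\,\mathcal{Q}_\bullet(\mathcal{F},M)$, and $\mathcal{Q}_\bullet(\text{id}_\mathcal{C},M) = \text{id}$.

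Next, suppose $\mathcal{C}$ and $\mathcal{C}'$ are $\mathcal{H}$-equivariantly equivalent, so there are $\mathcal{H}$-equivariant functors $\mathcal{F}\colon\mathcal{C}\to\mathcal{C}'$ and $\mathcal{G}\colon\mathcal{C}'\to\mathcal{C}$ together with $\mathcal{H}$-equivariant natural isomorphisms $\mathcal{G}\mathcal{F}\cong\text{id}_\mathcal{C}$ and $\mathcal{F}\mathcal{G}\cong\text{id}_{\mathcal{C}'}$. I would then apply Proposition \ref{homoeq} to the pair of $\mathcal{H}$-equivariant equivalent functors $\mathcal{G}\mathcal{F},\ \text{id}_\mathcal{C}\colon\mathcal{C}\to\mathcal{C}$: it yields that $\mathcal{Q}_\bullet(\mathcal{G}\mathcal{F},M)$ and $\mathcal{Q}_\bullet(\text{id}_\mathcal{C},M)$ are special homotopic, hence homotopic by Remark \ref{homotopy}. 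By functoriality the first map is $\mathcal{Q}_\bullet(\mathcal{G},M)\,\mathcal{Q}_\bullet(\mathcal{F},M)$ and the second is the identity, so $\mathcal{Q}_\bullet(\mathcal{G},M)\,\mathcal{Q}_\bullet(\mathcal{F},M)$ is homotopic to $\text{id}_{\mathcal{Q}_\bullet(\mathcal{C},M)}$. Applying Proposition \ref{homoeq} symmetrically to $\mathcal{F}\mathcal{G},\ \text{id}_{\mathcal{C}'}$ gives that $\mathcal{Q}_\bullet(\mathcal{F},M)\,\mathcal{Q}_\bullet(\mathcal{G},M)$ is homotopic to $\text{id}_{\mathcal{Q}_\bullet(\mathcal{C}',M)}$. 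Thus $\mathcal{Q}_\bullet(\mathcal{F},M)$ and $\mathcal{Q}_\bullet(\mathcal{G},M)$ are mutually inverse up to homotopy, i.e. $\mathcal{Q}_\bullet(\mathcal{C},M)$ and $\mathcal{Q}_\bullet(\mathcal{C}',M)$ are $\mathcal{H}$-equivariantly homotopy equivalent.

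The step requiring the most care is the first one: verifying that $\mathcal{Q}_\bullet(\mathcal{F},M)$ is well defined after passing to the coequalizers $q_{n,m}$ and the equalizer subobjects (equivalently, that it respects the relations imposed by the morphisms $[\tau_*^j,\rho_n]$), and that the homotopy produced by Proposition \ref{homoeq} is honestly a map of $\mathcal{H}$-modules, so that the resulting equivalence is $\mathcal{H}$-equivariant rather than merely $R$-linear. Both are immediate from $\mathcal{F}$ being a functor of $\mathcal{H}$-categories — in particular it intertwines the $\mathcal{H}$-actions on Hom-spaces — together with the fact, already established inside the proof of Proposition \ref{homoeq}, that the homotopy $\hbar_\bullet$ is a semi-cyclic $\mathcal{H}$-module map; no additional computation is needed.
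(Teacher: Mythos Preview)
Your proposal is correct and follows essentially the same approach as the paper: apply Proposition \ref{homoeq} to the pairs $(\mathcal{G}\mathcal{F},\text{id}_\mathcal{C})$ and $(\mathcal{F}\mathcal{G},\text{id}_{\mathcal{C}'})$ and invoke Remark \ref{homotopy}. Your version is more explicit about functoriality of $\mathcal{C}\mapsto\mathcal{Q}_\bullet(\mathcal{C},M)$ and about why the induced maps descend through the coequalizers, which the paper leaves implicit, but the argument is the same.
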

\begin{proof}
Let $\mathcal{F}:\mathcal{C} \longrightarrow \mathcal{C'}$ and $\mathcal{G}:\mathcal{C'} \longrightarrow \mathcal{C}$ be the $\mathcal{H}$-equivariant functors such that $\mathcal{G}\mathcal{F}\simeq \text{id}_\mathcal{C}$ and $\mathcal{F}\mathcal{G} \simeq \text{id}_\mathcal{C'}$ $\mathcal{H}$-equivariantly which means that the natural transformations $\hbar:\mathcal{F} \longrightarrow \mathcal{G}$ and $\hbar':\mathcal{G} \longrightarrow \mathcal{F}$ are $\mathcal{H}$-invariant. Then the result follows by applying Proposition \ref{homoeq} to the functors $\mathcal{F}\mathcal{G}$ and $\text{id}_\mathcal{C}'$, and the functors $\mathcal{G}\mathcal{F}$ and $\text{id}_\mathcal{C}$, and Remark \ref{homotopy}.
\end{proof}

\begin{proposition}\label{inclu}
Let $\mathcal{C}$ be an $\mathcal{H}$-category separated over a $\mathcal{H}$-subcategory $\xi$. Then
the natural inclusion of functors $\xi \longrightarrow \mathcal{C}$ induces a homotopy equivalence of cyclic $\mathcal{H}$-modules $\mathcal{Q}_\bullet(\mathcal{\xi},M) \longrightarrow \mathcal{Q}_\bullet(\mathcal{C},M)$ for any stable $\mathcal{H}$-module/comodule M.
\end{proposition}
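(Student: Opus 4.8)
The plan is to realize $\iota_\bullet\colon\mathcal{Q}_\bullet(\xi,M)\to\mathcal{Q}_\bullet(\mathcal{C},M)$ as a homotopy equivalence by producing an explicit homotopy inverse built from the separation data, following the strategy of \cite{carthy} and \cite{kk}. The inclusion functor $\xi\hookrightarrow\mathcal{C}$ is $\mathcal{H}$-equivariant (since $\xi$ is a full $\mathcal{H}$-subcategory), so it induces a morphism of cyclic right $\mathcal{H}$-modules $\iota_\bullet$, and by fullness $\mathcal{T}_\bullet(\xi,M)$ sits inside $\mathcal{T}_\bullet(\mathcal{C},M)$ as the subsum over tuples of objects of $\xi$. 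For the reverse direction I would fix, once and for all, for every object $X$ of $\mathcal{C}$ a separation datum $\{X^{(a)}\}_{1\le a\le n_X}$ in $\xi$ with $\mathcal{H}$-invariant $u^X_a\colon X\to X^{(a)}$, $v^X_a\colon X^{(a)}\to X$ and $\sum_a v^X_a u^X_a=\mathrm{id}_X$, choosing for $X$ already in $\xi$ the trivial datum ($n_X=1$, $u^X_1=v^X_1=\mathrm{id}_X$), which is allowed since separation only asks for existence. Define a ``resolution'' map on a basis simplex $f_0\otimes_L\cdots\otimes_L f_n\otimes_L m$ of $\mathcal{T}_n(\mathcal{C},M)$ (writing $f_j\colon X_{j+1}\to X_j$, indices read cyclically) by
$$\Phi_n(f_0\otimes_L\cdots\otimes_L f_n\otimes_L m)=\sum_{a_0,\dots,a_n}(u^{X_0}_{a_0}f_0\,v^{X_1}_{a_1})\otimes_L\cdots\otimes_L(u^{X_{n-1}}_{a_{n-1}}f_{n-1}\,v^{X_n}_{a_n})\otimes_L(u^{X_n}_{a_n}f_n\,v^{X_0}_{a_0})\otimes_L m,$$
a finite sum lying in $\mathcal{T}_n(\xi,M)$ by fullness of $\xi$.

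Next I would verify that $\Phi_\bullet$ is a \emph{semi-cyclic} $\mathcal{H}$-module map: it is $R$-$R$-bilinear, right $\mathcal{H}$-linear, and commutes with the face maps and the cyclic operator, but in general not with the degeneracies (a degeneracy duplicates a vertex and the two copies are resolved independently, so an inserted identity turns into $\sum_{p,q}u^{X}_p v^{X}_q\neq\mathrm{id}$ — this is exactly why one must work with semi-cyclic maps and special homotopies). Compatibility with an inner face $d_i$ ($0\le i\le n-1$) is a telescoping computation, since merging two adjacent resolved factors produces the internal sum $\sum_a v^{X}_a u^{X}_a=\mathrm{id}_X$ at the dropped vertex and leaves precisely $\Phi_{n-1}d_i$ of the contracted word; compatibility with $\tau_n$, with the last face $d_n=d_0\tau_n$, and with the twisted right $\mathcal{H}$-action is the substantial check. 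As $\Phi_\bullet$ commutes with $\rho_n$ and every $\tau_n^j$ on the nose, it descends through the colimit-of-coequalizers defining $\mathcal{Q}^\mathcal{H}_\bullet$ and then through the equalizer, giving a semi-cyclic $\mathcal{H}$-module map $\Phi_\bullet\colon\mathcal{Q}_\bullet(\mathcal{C},M)\to\mathcal{Q}_\bullet(\xi,M)$; and because the separation data on $\xi$ is trivial one has $\Phi_\bullet\circ\iota_\bullet=\mathrm{id}_{\mathcal{Q}_\bullet(\xi,M)}$ exactly (even on degeneracies).

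It remains to produce a special homotopy between $\iota_\bullet\circ\Phi_\bullet$ and $\mathrm{id}_{\mathcal{Q}_\bullet(\mathcal{C},M)}$, which I would do by copying the proof of Proposition \ref{homoeq} with the natural isomorphism there replaced by the separation data. Concretely, define $h_\bullet\colon\mathcal{T}_\bullet(\mathcal{C},M)\otimes CC_\bullet(L\mathcal{S})\to\mathcal{T}_\bullet(\mathcal{C},M)$ on $(f_0\otimes_L\cdots\otimes_L f_n\otimes_L m)\otimes(i_0\otimes_L\cdots\otimes_L i_n\otimes_L i_0)$ by reading the vertex-labels $Y_0,\dots,Y_n\in\{0,1\}$ of the $\mathcal{S}$-word: keep the object $X_j$ where $Y_j=0$, resolve it (summing over one resolution index per label-$1$ vertex) where $Y_j=1$, put in the $j$-th slot $\mu(f_j,i_j)$ with $\mu(f,\mathrm{id}_0)=f$, $\mu(f,1{\to}0)=f\,v^X_a$, $\mu(f,0{\to}1)=u^Y_b\,f$, $\mu(f,\mathrm{id}_1)=u^Y_b\,f\,v^X_a$ for $f\colon X\to Y$, and leave $m$ untouched. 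Exactly as in the inner-face computation (the gluing at a label-$1$ vertex again uses $\sum_a v^X_a u^X_a=\mathrm{id}_X$) and with the same Sweedler bookkeeping as in the previous paragraph for the $\mathcal{H}$-action and the cyclic and last-face maps, $h_\bullet$ commutes with the faces, the cyclic operator and the $\mathcal{H}$-action (but not the degeneracies), hence induces a semi-cyclic $\mathcal{H}$-module map $\hbar_\bullet\colon\mathcal{Q}_\bullet(\mathcal{C},M)\otimes CC_\bullet(L\mathcal{S})\to\mathcal{Q}_\bullet(\mathcal{C},M)$. Since $\varepsilon_0$ labels all vertices $0$ and $\varepsilon_1$ labels all vertices $1$, we get $\hbar_\bullet\varepsilon_0=\mathrm{id}$ and $\hbar_\bullet\varepsilon_1=\iota_\bullet\Phi_\bullet$, so $\hbar_\bullet$ is the required special homotopy; by Remark \ref{homotopy} the two maps are homotopic. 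Together with $\Phi_\bullet\iota_\bullet=\mathrm{id}$ this exhibits $\iota_\bullet$ as a homotopy equivalence of cyclic $\mathcal{H}$-modules (its homotopy inverse $\Phi_\bullet$ being a priori only semi-cyclic), so in particular $\iota_\bullet$ induces isomorphisms on cyclic cohomology; note that $M$, hence its stability, plays no role, the resolution affecting only the Hom-slots.

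The step I expect to be the main obstacle is the compatibility of $\Phi_\bullet$ (and of $h_\bullet$) with the cyclic operator, the last face and the right $\mathcal{H}$-action: the $\mathcal{H}$-action on the cyclic nerve is spread slot-by-slot through iterated coproducts and the inverse antipode $S^{-1}$, so the check is an exercise in juggling Sweedler legs, and it is precisely the $\mathcal{H}$-invariance written into the notion ``separated over'' — the comultiplication legs that land on $u^X_a,v^X_a$ collapse to $L$-scalars $s_L\varepsilon_L(h_{(k)})$ (equivalently $t_L\varepsilon_L(h_{(k)})$, both being available), which the counit axioms of the left bialgebroid $\mathcal{H}_L$ reabsorb — that makes the resolution commute with the twisted action. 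Everything else (the simplicial identities for the inner faces, the gluing via $\sum_a v^X_a u^X_a=\mathrm{id}_X$, the passage to $\mathcal{Q}^\mathcal{H}_\bullet$ and the equalizer $\mathcal{Q}_\bullet$, and the role of the category $\mathcal{S}$) is formal and parallels \cite{carthy} and \cite{kk}.
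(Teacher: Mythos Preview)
Your proposal is correct and follows essentially the same route as the paper's proof: define the resolution map $\Phi_\bullet$ (the paper's $\mathcal{E}_\bullet$) from the separation data, and build a special homotopy $\hbar_\bullet$ between $\iota_\bullet\Phi_\bullet$ and $\mathrm{id}$ via the $\mu(f,i)$ recipe indexed by the nerve of $L\mathcal{S}$. You are in fact slightly more careful than the paper in one place: by fixing the trivial separation datum on objects of $\xi$ you obtain $\Phi_\bullet\iota_\bullet=\mathrm{id}$ on the nose, whereas the paper only exhibits the homotopy $\iota_\bullet\mathcal{E}_\bullet\sim\mathrm{id}$ and leaves the other direction implicit.
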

\begin{proof}
Since $\mathcal{C}$ is $\mathcal{H}$-separated over $\xi$, we have for each $X \in \text{Ob}(\mathcal{C})$, a finite set of objects $\{X_1,\ldots, X_{n(X)}\}$ in $\xi$ and $\mathcal{H}$-invariant morphisms $u_i: X \longrightarrow X_i$ and $v_j: X_j \longrightarrow X$ in $\mathcal{C}$ satisfying $\sum_{i=1}^{n(X)} v_iu_i=id_X.$ We define a semi-cyclic $\mathcal{H}$-module map $\mathcal{E}_\bullet:\mathcal{Q}_\bullet(\mathcal{C},M) \longrightarrow \mathcal{Q}_\bullet(\xi,M)$ as
$$\mathcal{E}_n(f_0 \otimes_L \cdots \otimes_L f_n \otimes_L m)=\sum u_{i_0}(X_0)f_0v_{i_1}(X_1) \otimes_L u_{i_{n-1}}(X_{n-1})f_{n-1}v_{i_n}(X_n) \otimes_L u_{i_n}(X_n)f_nv_{i_0}(X_0) \otimes_L m$$
for any $f_0 \otimes_L \cdots \otimes_L f_n \otimes_L m \in \mathcal{Q}_\bullet(\mathcal{C},M).$ 

Now we construct a special homotopy $\hbar_\bullet$ between the maps $i_\bullet \mathcal{E}_\bullet$ and $\text{id}_\bullet$ as follows:
$$\hbar((f_0 \otimes_L \cdots \otimes_L f_n \otimes_L m) \otimes (i_0 \otimes_L \cdots \otimes_L i_n \otimes_L i_0))=\mu(f_0,i_0)\otimes_L \cdots \otimes_L \mu(f_n,i_n) \otimes_L m,$$ where 
$ \mu(f,i)=
\begin{cases} \begin{array}{lcc}f  & \text{if}~& i = \text{id}_0,\\
\sum_j u_j(Y)f  & \text{if}~& i= 1 \longrightarrow 0,\\
\sum_j fv_j(X)  & \text{if}~& i= 0 \longrightarrow 1,\\
\sum_{i,j}u_j(Y)fv_j(X)  & \text{if}~& i = \text{id}_1.
\end{array}
\end{cases}$

One can see that $\hbar_\bullet\varepsilon_0=\text{id}_\bullet$ and $\hbar_\bullet\varepsilon_1=i_\bullet \mathcal{E}_\bullet$.
\end{proof}

\begin{definition}[Compare \cite{kk}]
Two $\mathcal{H}$-module algebras $A$ and $A'$ are said to be $\mathcal{H}$-equivariantly
Morita equivalent if the categories ${_\mathcal{H}}{\mathcal{P}}{^A}$ and ${_\mathcal{H}}{\mathcal{P}}{^{A'}}$ are $\mathcal{H}$-equivariantly equivalent.
\end{definition}

\begin{proposition}
Let $\mathcal{F}:{_\mathcal{H}}{\mathcal{P}}{^A} \longrightarrow {_\mathcal{H}}{\mathcal{P}}{^{A'}}$ be an $\mathcal{H}$-equivariant functor commuting with direct sums. Then there exists an $A$-$A'$-bimodule $P$ such that the functor $\mathcal{F}$ is naturally isomorphic to the functor $(\cdot) \otimes_A P$ as $\mathcal{H}$-equivariant functors.
\end{proposition}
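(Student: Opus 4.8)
The strategy is the $\mathcal{H}$-equivariant analogue of the Eilenberg--Watts theorem: evaluate $\mathcal{F}$ at the rank-one free module and show that $\mathcal{F}$ is recovered by tensoring with the resulting bimodule, exploiting that every object of ${_\mathcal{H}}{\mathcal{P}}{^A}$ is a direct summand --- compatibly with the $\mathcal{H}$-action --- of a finitely generated free right $A$-module. Concretely, regard $A$ as the rank-one free object of ${_\mathcal{H}}{\mathcal{P}}{^A}$ (it is a finitely generated $\mathcal{H}$-equivariantly projective right $A$-module, the canonical epimorphism being $\mathrm{id}_A$, which is $\mathcal{H}$-invariant by axiom (ii) of an $\mathcal{H}$-category), and put $P:=\mathcal{F}(A)$. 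By hypothesis $P$ is a finitely generated $\mathcal{H}$-equivariantly projective right $A'$-module, hence in particular a left $\mathcal{H}$-module and a right $A'$-module. For $a\in A$, left multiplication $\lambda_a\colon A\to A$, $\lambda_a(x)=ax$, is a morphism of ${_\mathcal{H}}{\mathcal{P}}{^A}$; since $\lambda_a\lambda_b=\lambda_{ab}$ and $\lambda_{1_A}=\mathrm{id}_A$, functoriality of $\mathcal{F}$ makes $a\cdot p:=\mathcal{F}(\lambda_a)(p)$ an associative unital left $A$-action on $P$ which commutes with the right $A'$-action because each $\mathcal{F}(\lambda_a)$ is right $A'$-linear. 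Thus $P$ is an $A$-$A'$-bimodule.

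Next I would verify the compatibility of this left $A$-action with the left $\mathcal{H}$-action on $P$, namely $h(a\cdot p)=(h_{(1)}\triangleright a)(h_{(2)}\cdot p)$, using that $\mathcal{F}$ is $\mathcal{H}$-equivariant and that $A$ is a left $\mathcal{H}$-module algebra; this is a Sweedler-calculus computation of the same type as the proof that ${_\mathcal{H}}{M}{^A}$ is an $\mathcal{H}$-category. With this, $P$ is an $\mathcal{H}$-equivariant $A$-$A'$-bimodule, so for every $X\in{_\mathcal{H}}{\mathcal{P}}{^A}$ the diagonal formula $h(x\otimes p)=h_{(1)}x\otimes h_{(2)}p$ is well defined over $\otimes_A$ and makes $X\otimes_A P$ a left $\mathcal{H}$-equivariant right $A'$-module; when $X$ is a summand of $A^{\oplus n}$ via $\mathcal{H}$-invariant morphisms, $X\otimes_A P$ is, via the corresponding morphisms, a summand of $P^{\oplus n}$ as both an $A'$-module and an $\mathcal{H}$-module, hence an object of ${_\mathcal{H}}{\mathcal{P}}{^{A'}}$. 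Together with the fact that $\otimes_A$ preserves $\mathcal{H}$-linearity of morphisms, this shows $(\cdot)\otimes_A P\colon{_\mathcal{H}}{\mathcal{P}}{^A}\to{_\mathcal{H}}{\mathcal{P}}{^{A'}}$ is an $\mathcal{H}$-equivariant functor commuting with direct sums.

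Now I would build the comparison. For $x\in X$, right multiplication $r_x\colon A\to X$, $r_x(a)=xa$, is a morphism of ${_\mathcal{H}}{\mathcal{P}}{^A}$, and I set $\eta_X(x\otimes p):=\mathcal{F}(r_x)(p)$. The identity $r_x\lambda_a=r_{xa}$ shows $\eta_X$ is well defined on $X\otimes_A P$; right $A'$-linearity of $\mathcal{F}(r_x)$ makes $\eta_X$ right $A'$-linear; and $\phi r_x=r_{\phi(x)}$ for a morphism $\phi\colon X\to Y$ yields the naturality square $\mathcal{F}(\phi)\eta_X=\eta_Y(\phi\otimes\mathrm{id})$. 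For $\mathcal{H}$-equivariance of $\eta_X$, I would first extract from the $\mathcal{H}$-equivariance of $\mathcal{F}$ and the antipode axioms the identity $h\bigl(\mathcal{F}(g)(p)\bigr)=\mathcal{F}(h_{(1)}g)(h_{(2)}p)$ for $g\colon A\to X$, and combine it with the identity $h\cdot r_x=r_{h x}$ (valid up to the source/target corrections dictated by the Hopf algebroid structure, exactly as above); this gives $\eta_X(h(x\otimes p))=h(\eta_X(x\otimes p))$. To see that $\eta$ is an isomorphism: for $X=A$ the canonical identification $A\otimes_A P\cong P$ carries $\eta_A(a\otimes p)=\mathcal{F}(\lambda_a)(p)=a\cdot p$ to the identity of $P$, so $\eta_A$ is an isomorphism, and, both functors commuting with finite direct sums and $\eta$ being natural, $\eta_{A^{\oplus n}}$ is an isomorphism for all $n$. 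For general $X$, Lemma \ref{sep} provides $i\colon X\to A^{\oplus n}$ and $\pi\colon A^{\oplus n}\to X$ that are $\mathcal{H}$-invariant with $\pi i=\mathrm{id}_X$; naturality of $\eta$ makes $\eta_{A^{\oplus n}}$ intertwine the split idempotents $e\otimes\mathrm{id}_P$ (where $e:=i\pi$) on $A^{\oplus n}\otimes_A P$ and $\mathcal{F}(e)$ on $\mathcal{F}(A^{\oplus n})$, so it restricts to an isomorphism of their images, which are $X\otimes_A P$ and $\mathcal{F}(X)$, and this restriction is precisely $\eta_X$. Hence $\eta\colon(\cdot)\otimes_A P\Rightarrow\mathcal{F}$ is a natural isomorphism of $\mathcal{H}$-equivariant functors.

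The main obstacle is precisely the bookkeeping in the two places flagged above: checking that the left $A$-action $a\cdot p=\mathcal{F}(\lambda_a)(p)$ and the comparison $\eta$ respect the $\mathcal{H}$-action. Over a Hopf algebra these verifications are immediate from $S(h_{(1)})h_{(2)}=\varepsilon(h)1$; over a Hopf algebroid one must instead carry the base-algebra corrections coming from $S(h_{(1)})h_{(2)}=s_R\varepsilon_R(h)$, $h^{(1)}S(h^{(2)})=s_L\varepsilon_L(h)$, the anti-isomorphism between $L$ and $R$, and the various compatibilities of $s_L,t_L,s_R,t_R,\varepsilon_L,\varepsilon_R$ with $S$ and $S^{-1}$ --- the same difficulty that appears in the proof that ${_\mathcal{H}}{M}{^A}$ is an $\mathcal{H}$-category. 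Everything else in the argument is formal.
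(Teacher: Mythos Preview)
Your proposal is correct and follows essentially the same Eilenberg--Watts strategy as the paper: set $P=\mathcal{F}(A)$, make it an $A$-$A'$-bimodule via $\mathcal{F}(\lambda_a)$, verify the comparison is an isomorphism on free modules, and then pass to summands using the $\mathcal{H}$-invariant splittings supplied by the definition of $\mathcal{H}$-equivariant projectivity. The one noteworthy difference is in how the natural transformation is written down: you give the uniform formula $\eta_X(x\otimes p)=\mathcal{F}(r_x)(p)$, while the paper defines $\eta_X$ as the composite $\mathcal{F}(\pi_X)\circ\beta_n\alpha_n\circ(i_X\otimes_A P)$ through a chosen free cover $A^{\oplus n}$ of $X$. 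These are the same map, since $\mathcal{F}(\pi_X)\,\beta_n\alpha_n\bigl(i_X(x)\otimes p\bigr)=\mathcal{F}(\pi_X\circ r_{i_X(x)})(p)=\mathcal{F}(r_{\pi_X i_X(x)})(p)=\mathcal{F}(r_x)(p)$; your formulation has the advantage that naturality and independence of the splitting are automatic, whereas the paper's composite makes the inverse $(\pi_X\otimes_A P)\alpha_n^{-1}\beta_n^{-1}\mathcal{F}(i_X)$ explicit. Your proof is also more careful than the paper's about the $\mathcal{H}$-equivariance of $\eta$ and of the $A$-action on $P$ --- points the paper leaves implicit --- and you correctly identify the Hopf-algebroid bookkeeping (the $s_L,t_L,s_R,t_R,\varepsilon_L,\varepsilon_R$ corrections replacing $\varepsilon(h)1$) as the only nontrivial work.
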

\begin{proof}
We first claim that the statement holds for the subcategory of ${_\mathcal{H}}{\mathcal{P}}{^A}$ which consists of $\mathcal{H}$-equivariant modules isomorphic to the modules of the form $A^{\oplus n}$ for some $n \geq 1$. Let $P:=\mathcal{F}(A)$. Then, by definition, $P$ is a right $A'$-module. For $a \in A$, the $L$-linear morphism $\mathcal{F}(L_a):\mathcal{F}(A) \longrightarrow \mathcal{F}(A)$ defined by left multiplication by $a$, is a right $A'$ module morphism. Thus $P$ is a left $A$-module. 

Consider the $\mathcal{H}$-equivariant $A$-module $A^{\oplus n} \otimes_A P$, where the $\mathcal{H}$-module structure on the tensor product is diagonal. We define an isomorphism of $\mathcal{H}$-equivariant $A$-modules $\alpha_n:A^{\oplus n} \otimes_A P \longrightarrow P^{\oplus n}$ as
$$\alpha_n((a_1,\ldots,a_n) \otimes_A x)=(a_1x,\ldots,a_nx),$$ for any $n \geq 1$, $a_i \in A$ and $x \in P$. Thus we get an isomorphism 
\[
\xymatrix{
A^{\oplus n} \otimes_A P \ar[r]^-{\alpha_n} & P^{\oplus n} \ar[r]^-{\beta_n} & F(A^{\oplus n}),
}
\]
where $\beta_n$ is the canonical isomorphism of $\mathcal{H}$-equivariant $A$-modules. This proves the claim. 

Now, let $X \in \text{Ob}({_\mathcal{H}}{\mathcal{P}}{^A})$. Then, by definition, there exist $X' \in \text{Ob}({_\mathcal{H}}{\mathcal{P}}{^A})$ such that $X \oplus X' \cong A^{\oplus n}$ as $\mathcal{H}$-equivariant $A$-modules for some $n \geq 1$, and $\mathcal{H}$-invariant morphisms $i_X: X \longrightarrow A^{\oplus n}$ and $\pi_X: A^{\oplus n} \longrightarrow X$ satisfying $\pi_Xi_X=\text{id}_X$. We define a natural transformation $\eta:(\cdot) \otimes_A P\longrightarrow \mathcal{F}$ by the composition
\[
\xymatrix{
X \otimes_A P \ar[r]^-{i_X \otimes_A P} & (X \oplus X') \otimes_A P \ar[r]^-{\beta_n \alpha_n}& \mathcal{F}(X \oplus X') \ar[r]^-{\mathcal{F}(\pi_X)}& \mathcal{F}(X).
}
\]
The map $\eta$ is, in fact, an isomorphism with inverse given by $(\pi_X \otimes_A P) \alpha_n^{-1}\beta_n^{-1}\mathcal{F}(i_X)$.
\end{proof}

\begin{theorem}
Let $A$ and $A'$ be unital $\mathcal{H}$-module algebras which are $\mathcal{H}$-equivariantly Morita equivalent and $M$ be an stable $\mathcal{H}$-module/comodule. Then the $\mathcal{H}$-equivariant and Hopf-cyclic cohomologies of the pairs $(A,M)$ and $(A',M)$ are isomorphic.
\end{theorem}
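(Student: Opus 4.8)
The strategy is to assemble the machinery built up in the preceding subsections into a single chain of homotopy equivalences connecting the $\mathcal{H}$-equivariant cyclic modules of $(A,M)$ and $(A',M)$, and then to descend to the Hopf-cyclic modules by applying $(-)\otimes_\mathcal{H} R$. First I would reduce the $\mathcal{H}$-equivariant cyclic cohomology of $(A,M)$ to that of an $\mathcal{H}$-category. By Remark~\ref{oneobjiso}, $\mathcal{Q}_\bullet(A,M)\cong\mathcal{Q}_\bullet(\ast^A_\mathcal{H},M)$ as cyclic $\mathcal{H}$-modules. Next, Lemma~\ref{sep} tells us that ${_\mathcal{H}}{\mathcal{P}}{^A}$ is separated over ${_\mathcal{H}}{\mathcal{F}}{^A}$, which in turn is separated over $\ast^A_\mathcal{H}$; so two applications of Proposition~\ref{inclu} give homotopy equivalences of cyclic $\mathcal{H}$-modules
\[
\mathcal{Q}_\bullet(\ast^A_\mathcal{H},M)\;\longrightarrow\;\mathcal{Q}_\bullet({_\mathcal{H}}{\mathcal{F}}{^A},M)\;\longrightarrow\;\mathcal{Q}_\bullet({_\mathcal{H}}{\mathcal{P}}{^A},M),
\]
and likewise for $A'$. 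Thus it suffices to produce a homotopy equivalence $\mathcal{Q}_\bullet({_\mathcal{H}}{\mathcal{P}}{^A},M)\simeq\mathcal{Q}_\bullet({_\mathcal{H}}{\mathcal{P}}{^{A'}},M)$.

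For that last link, I would use the hypothesis of $\mathcal{H}$-equivariant Morita equivalence: by definition there are $\mathcal{H}$-equivariant functors $\mathcal{F}:{_\mathcal{H}}{\mathcal{P}}{^A}\to{_\mathcal{H}}{\mathcal{P}}{^{A'}}$ and $\mathcal{G}$ in the other direction with $\mathcal{G}\mathcal{F}\simeq\mathrm{id}$ and $\mathcal{F}\mathcal{G}\simeq\mathrm{id}$ $\mathcal{H}$-equivariantly. (One may first invoke the preceding proposition, identifying $\mathcal{F}$ up to $\mathcal{H}$-equivariant natural isomorphism with $(-)\otimes_A P$ for a suitable bimodule $P$, if a concrete description of the functor is wanted, but strictly this is not needed.) Each $\mathcal{H}$-equivariant functor $\mathcal{F}$ induces a morphism of cyclic $\mathcal{H}$-modules $\mathcal{Q}_\bullet(\mathcal{F},M):\mathcal{Q}_\bullet({_\mathcal{H}}{\mathcal{P}}{^A},M)\to\mathcal{Q}_\bullet({_\mathcal{H}}{\mathcal{P}}{^{A'}},M)$ by functoriality of the nerve construction $\mathcal{Q}_\bullet(-,M)$. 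Corollary~\ref{homoequi}, applied to the equivalence, then yields that $\mathcal{Q}_\bullet(\mathcal{F},M)$ and $\mathcal{Q}_\bullet(\mathcal{G},M)$ are mutually inverse up to $\mathcal{H}$-equivariant homotopy, giving the desired homotopy equivalence of cyclic $\mathcal{H}$-modules. Composing with the chain from the previous paragraph and its $A'$-counterpart produces an $\mathcal{H}$-equivariant homotopy equivalence $\mathcal{Q}_\bullet(A,M)\simeq\mathcal{Q}_\bullet(A',M)$; since homotopic maps of cyclic modules induce the same maps on cyclic cohomology (Remark~\ref{homotopy} together with the standard fact for cyclic modules), we conclude $HC^*_\mathcal{H}(A,M)\cong HC^*_\mathcal{H}(A',M)$.

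Finally I would transfer the statement to Hopf-cyclic cohomology. By the definition of $C_\bullet(A,M)=\mathcal{Q}_\bullet(A,M)\otimes_\mathcal{H} R$, the functor $(-)\otimes_\mathcal{H} R$ is additive and preserves the simplicial/cyclic structure, so it sends an $\mathcal{H}$-equivariant homotopy equivalence of cyclic $\mathcal{H}$-modules to a homotopy equivalence of cyclic $R$-modules; the point is that a homotopy operator in the category of $\mathcal{H}$-modules remains a homotopy operator after applying $\otimes_\mathcal{H} R$, because the homotopy built in Proposition~\ref{homoeq} is assembled from $\mathcal{H}$-module maps. Hence $C_\bullet(A,M)$ and $C_\bullet(A',M)$ are homotopy equivalent as cyclic $R$-modules, and therefore $HC^*_{\mathrm{Hopf}}(A,M)\cong HC^*_{\mathrm{Hopf}}(A',M)$.

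\textbf{Main obstacle.} The delicate point is verifying that the chain of maps really lives in the category of cyclic \emph{$\mathcal{H}$-modules} — i.e. that $\mathcal{Q}_\bullet(-,M)$ is genuinely functorial in $\mathcal{H}$-equivariant functors and that the homotopies produced by Propositions~\ref{homoeq} and~\ref{inclu} are semi-cyclic $\mathcal{H}$-module maps compatible with passing to the quotient/equalizer defining $\mathcal{Q}_\bullet$ from $\mathcal{T}_\bullet$. Once that bookkeeping is in place, so that all homotopy equivalences are $\mathcal{H}$-equivariant, the reduction to $\mathcal{H}$-categories and the application of Corollary~\ref{homoequi} are formal, and the passage to $HC^*_{\mathrm{Hopf}}$ via $\otimes_\mathcal{H} R$ is routine.
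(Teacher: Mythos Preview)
Your proposal is correct and follows essentially the same route as the paper: identify $\mathcal{Q}_\bullet(A,M)$ with $\mathcal{Q}_\bullet(\ast^A_\mathcal{H},M)$ via Remark~\ref{oneobjiso}, pass to $\mathcal{Q}_\bullet({_\mathcal{H}}{\mathcal{P}}{^A},M)$ using Lemma~\ref{sep} and Proposition~\ref{inclu}, and then invoke Corollary~\ref{homoequi} on the Morita equivalence. The only differences are cosmetic: you factor the separation step through ${_\mathcal{H}}{\mathcal{F}}{^A}$ explicitly (the paper collapses this, using that separation is transitive), and you spell out the descent to Hopf-cyclic cohomology via $(-)\otimes_\mathcal{H} R$, which the paper leaves implicit.
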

\begin{proof}
The cyclic $\mathcal{H}$-modules $\mathcal{Q}_\bullet(_{\mathcal{H}}{\mathcal{P}}{^A},M)$ and $\mathcal{Q}_\bullet(_{\mathcal{H}}{\mathcal{P}}{^{A'}},M)$ are homotopy equivalent by Corollary \ref{homoequi}. Since $\mathcal{Q}_\bullet(_{\mathcal{H}}{\mathcal{P}}{^A},M)$ is separated over $\ast^A_{\mathcal{H}}$ by Lemma \ref{sep}, the inclusion functor $\ast^A_{\mathcal{H}} \hookrightarrow \mathcal{Q}_\bullet(_{\mathcal{H}}{\mathcal{P}}{^A},M)$ induces a homotopy equivalence between $\mathcal{Q}_\bullet(\ast^A_{\mathcal{H}},M)$ and $\mathcal{Q}_\bullet(_{\mathcal{H}}{\mathcal{P}}{^A},M)$ by Proposition \ref{inclu}. The result follows by Remark \ref{oneobjiso}.
\end{proof}

\section*{Acknowledgements}
This work is supported by the National Postdoctoral Fellowship PDF/2017/000229 from SERB, Department of Science \& Technology, Government of India. The author thanks Abhishek Banerjee for several stimulating discussions. The author also thanks Atabey Kaygun for helpful conversations.

\begin{bibdiv}
\begin{biblist}
\bib{b2}{article}{
   author={B\"ohm, G.},
   title={Integral theory for Hopf algebroids},
  journal={Algebr. Repesent. Theory},
   volume={8},
   date={2005},
   pages={563--599},
}

\bib{b}{book}{
   author={B\"ohm, G.},
   title={Hopf algebroids, in Handbook of algebra},
  publisher={Elsevier/North-Holland, Amsterdam},
   volume={6},
   date={2009},
   pages={173--235},
}

\bib{bns}{article}{
   author={B\"ohm, G.},
   author={Nill, F.},
   author={Szlach\'anyi, K.},
   title={Weak Hopf algebras. I. Integral theory and $C^*$-structure},
   journal={J. Algebra},
   volume={221},
   date={1999},
   number={2},
   pages={385--438},
}

\bib{bs}{article}{
   author={B\"ohm, G.},
   author={Szlach\'anyi, K.},
   title={Hopf algebroids with bijective antipodes: axioms, integrals, and
   duals},
   journal={J. Algebra},
   volume={274},
   date={2004},
   number={2},
   pages={708--750},
}

\bib{CM1}{article}{
   author={Connes, A.},
   author={Moscovici, H.},
   title={Hopf algebras, cyclic cohomology and the transverse index theorem},
   journal={Comm. Math. Phys.},
   volume={198},
   date={1998},
   number={1},
   pages={199--246},
}

\bib{CM2}{article}{
   author={Connes, A.},
   author={Moscovici, H.},
   title={Cyclic cohomology and Hopf algebras},
   journal={Lett. Math. Phys.},
   volume={48},
   date={1999},
   number={1},
   pages={97--108},
}

\bib{HKRS}{article}{
   author={Hajac, P. M.},
   author={Khalkhali, M.},
   author={Rangipour, B.},
   author={Sommerh\"auser, Y.},
   title={Hopf-cyclic homology and cohomology with coefficients},
   journal={C. R. Math. Acad. Sci. Paris},
   volume={338},
   date={2004},
   number={9},
   pages={667--672},
}

\bib{ks}{article}{
   author={Kadison, L.},
   author={Szlach\'anyi, K.},
   title={Bialgebroid actions on depth two extensions and duality},
   journal={Adv. Math.},
   volume={179},
   date={2003},
   number={1},
   pages={75--121},
}
		
\bib{k}{article}{
   author={Kaygun, A.},
   title={The universal Hopf-cyclic theory},
   journal={J. Noncommut. Geom.},
   volume={2},
   date={2008},
   number={3},
   pages={333--351},
}

\bib{kk}{article}{
   author={Kaygun, A.},
   author={Khalkhali, M.},
   title={Bivariant Hopf cyclic cohomology},
   journal={Comm. Algebra},
   volume={38},
   date={2010},
   number={7},
}

\bib{Lu}{article}{
   author={J-H. Lu},
   title={Hopf algebroids and quantum groupoids},
   journal={Int. J. Math.},
   volume={7},
   date={1996},
   pages={47--70},
   
}

\bib{carthy2}{article}{
author={McCarthy, R.},
   title={Morita equivalence and cyclic homology},
   journal={C. R. Acad. Sci. Paris S\'er. I Math.},
   volume={307},
   date={1988},
   number={6},
   pages={211--215},
}
 
\bib{carthy}{article}{
   author={McCarthy, R.},
   title={The cyclic homology of an exact category},
   journal={J. Pure Appl. Algebra},
   volume={93},
   date={1994},
   number={3},
   pages={251--296},
}

\bib{S}{article}{
   author={Schauenburg, P.},
   title={Bialgebras over noncommutative rings, and a structure theorem for Hopf bimodules},
   journal={Applied Categorical Structures},
   volume={6},
   date={1998},
   pages={193--222},
}

\bib{S2}{article}{
   author={Schauenburg, P.},
   title={Duals and doubles of quantum groupoids ($\times_R$-Hopf algebras)},
   conference={
      title={New trends in Hopf algebra theory},
      address={La Falda},
      date={1999},
   },
   book={
      series={Contemp. Math.},
      volume={267},
      publisher={Amer. Math. Soc., Providence, RI},
   },
   date={2000},
   pages={273--299},
}

\bib{xu}{article}{
   author={Xu, P.},
   title={Quantum groupoids},
   journal={Comm. Math. Phys.},
   volume={216},
   date={2001},
   number={3},
   pages={539--581},
}
\end{biblist}
\end{bibdiv}
\end{document}